\newtheorem{theorem}{Theorem}
\newtheorem{lemma}[theorem]{Lemma}
\newenvironment{proof}[1][Proof]{\noindent\textbf{#1.} }{\ \rule{0.5em}{0.5em}}
\begin{document}

\title{Generalized Lyapunov-Sylvester operators for Kuramoto-Sivashinsky
Equation}
\author{Abdelhamid BEZIA \and Anouar Ben Mabrouk}
\maketitle

\begin{abstract}
A numerical method is developed leading to algebraic systems based on
generalized Lyapunov-Sylvester operators to approximate the solution of
two-dimensional Kuramoto-Sivashinsky equation. It consists of an order
reduction method and a finite difference discretization which is proved to
be uniquely solvable, stable and convergent by using Lyapunov criterion and
manipulating generalized Lyapunov-Sylvester operators. Some numerical
implementations are provided at the end to validate the theoretical results.%
\newline

\textbf{Mathematics Subject Classification (2010)}. 65M06, 65M12, 65M22,
15A30, 37B25.

\textbf{Key words}: Kuramoto-Sivashinsky equation, Finite difference method,
LyapunovSylvester operators.
\end{abstract}

\section{Introduction}

The present paper is devoted to the development of a computational method
based on two-dimensional finite difference scheme to approximate the
solution of the nonlinear Kuramoto-Sivashinsky equation%
\begin{equation}
\frac{\partial u}{\partial t}=q\Delta u-\kappa \Delta ^{2}u+\lambda
\left\vert \nabla u\right\vert ^{2},\ ((x,y),t)\in \Omega \times
(t_{0},+\infty ),  \label{E-Kura-Shiva}
\end{equation}%
with initial conditions
\begin{equation}
u(x,y,t_{0})=\varphi (x,y)\;;\quad (x,y)\in \Omega  \label{eqn1-3}
\end{equation}%
and boundary conditions
\begin{equation}
\frac{\partial u}{\partial \eta }\left( x,y,t\right) =0\;;\quad ((x,y),t)\in
\partial \Omega \times (t_{0},+\infty ),  \label{eqn1-4}
\end{equation}%
on a rectangular domain $\Omega =[L_{0},L_{1}]\times \lbrack L_{0},L_{1}]$
in $\mathbb{R}^{2},$ $t_{0}\geq 0$ is a real parameter fixed as the initial
time. $\frac{\partial }{\partial t}$ is the time derivative, $\nabla $ is
the space gradient operator and $\Delta =\frac{\partial ^{2}}{\partial x^{2}}%
+\frac{\partial ^{2}}{\partial y^{2}}$ is the Laplace operator in $\mathbb{R}%
^{2}$, $q$,$\,\kappa $,$\,\lambda $ are real parameters. $\varphi $ and $%
\psi $ are twice differentiable real valued functions on $\overline{\Omega }$%
.

We propose to apply an order reduction of the derivation and thus to solve a
coupled system of equation involving second order differential operators. We
set $v=qu-\kappa \Delta u$ and thus we have to solve the system%
\begin{equation}
\left\{
\begin{array}{l}
\frac{\partial u}{\partial t}=\Delta v+\lambda \left\vert \nabla
u\right\vert ^{2},\quad (x,y,t)\in \Omega \times (t_{0},+\infty ) \\
v=qu-\kappa \Delta u,\quad (x,y,t)\in \Omega \times (t_{0},+\infty ) \\
\left( u,v\right) (x,y,t_{0})=\left( \varphi ,\psi \right) (x,y),\quad
(x,y)\in \overline{\Omega } \\
\overrightarrow{\nabla }(u,v)(x,y,t)=0,\quad (x,y,t)\in \partial \Omega
\times (t_{0},+\infty )%
\end{array}%
\right.  \label{Problem1}
\end{equation}

The Kuramoto-Sivashinsky equation (KS) is one of the most famous equations
in math-physics for many decades. It has its origin in the work of Kuramoto
since the 70-th decade of the 20-th century in his study of
reaction-diffusion equation. The equation was then considered by Sivashinsky
in modeling small thermal diffusion instabilities for laminar flames and
modeling the reference flux of a film layer on an inclined plane. Since then
the KS equation has experienced a growing development in theoretical
mathematics, numerical as well as physical mechanics, nonlinear physics,
hydrodynamics, chemistry, plasma physics, particle distributions advection,
surface morphology, ...etc. See \cite{Benachour}, \cite{Hansen}, \cite{Hong}%
, \cite{Jayaprakash}, \cite{Kuramoto}, \cite{Nadjafikhah}, \cite{Procaccia},
\cite{Rost}, \cite{Sivashinsky}, \cite{Sivashinsky2}. In \cite{Nadjafikhah},
the symmetry problem of the model was studied based on the theory of Lie
algebras. In \cite{Benachour}, an anisotropic version of the KS equation has
been proposed leading to global resolutions of the equation on rectangular
domaines. Sufficient conditions were given for the existence of global
solution.

From the dimensional point of view, KS equation has been widely studied in
one dimension, \cite{Giacomelli}, \cite{Goodman}, \cite{Ilyashenko}, \cite%
{Nicolaenko}, \cite{Otto}, \cite{Temam}. However, this equation since its
appearance is related to the modeling of flame spread which is a
two-dimensional problem. In this context, an important result was developed
in \cite{Sell} where the authors showed by adapting the method developed in
\cite{Raugel} the existence of a set of bounded solutions on a rectangular
domain. This major importance of the two-dimensional model was a main
motivation behind this work. A model representing a nonlinear dynamical
system defined in a two-dimensional space is considered, where the solution $%
u(x,y,t)$ satisfies a fourth order partial differential equation of the form
(\ref{E-Kura-Shiva}), where $u$ is the height of the interface, $q$ is a
pre-factor proportional to the coefficient of surface tension expressed by $%
q\nabla ^{2}u$. The quantity $\kappa \nabla ^{4}u$ represents the result of
the diffusion surface due to the chemical potential gradient induced
curvature. The pre-factor $\kappa $ represents the surface diffusion term.
The quantity $\lambda |\nabla u|^{2}$ is due to the existence of overhangs
and vacancies during the deposition process. Finally, the quantity $\nu
\nabla ^{2}u+\lambda \nabla u|^{2}u$ is referred to as modeling the effect
of deposited atoms. cf. \cite{Hong}.

In the present work we propose to serve of algebraic operators to
approximate the solutions of the Kuramoto-Sivashinsky(K-S) equation in the
two patial and one temporal dimesion without adapting classical developments
based on separation of variables, radial solutions, tridiagonal operators,
... etc.

This was onefold of the present paper. A second crucial idea is to transform
the continuous K-S equation into a generalized Lyapunov-Sylvester equation
of the form
\begin{equation}
\sum_{i}A_{i}X_{n}B_{i}=C_{n}  \label{Lyp-Sylv-Equa}
\end{equation}%
where $A_{i}$ and $B_{i}$ are appropriate matrices depending on the
discretization procedure and the problem parameters. $X_{n}$ represents the
numerical solution at time $n$ and $C_{n}$ is usually depending on the past
values $X_{k},$ $k\leq n-1$ of the the solution. The equation $\left( \ref%
{Lyp-Sylv-Equa}\right) $ is known as generalized Lyapunov-Sylvester
equation. Such equations have their origin in the work of Sylvester on
classical matrices equations. In the particular case

\begin{equation}
\sum A_{i}X\overline{A}_{i}^{T}=C
\end{equation}%
$\bigskip $the equation is knwon restrictively as Lyapunov one \cite%
{Simoncini}. Generally speaking, the equation

\begin{equation}
\sum_{i}A_{i}XB_{i}=C  \label{LSG1}
\end{equation}%
\bigskip\ is very difficult to be inverted and remains an open problem in
algebra. Nevertheless, some works have been developed and proved that under
suitable conditions on the coefficient matrices, one may get a unique
solution, but it's exact computation remains hard. It necessitates to
compute eigenvalues and precisely bounds/estimates of eigenvalues or direct
inverses of big matrices which remains a complicated problem and usually
inappropriate.

In \cite{Lancaster}, a native method to solve $\left( \ref{LSG1}\right) $ is
investigated based on Kronecker product and equivalent matrix-vector
equation. The Sylvester's equation is transformed into a linear equivalent
one on the form $Gx=c,$ with a matrix $G$ obtained by tensor products issued
from the $A_{j}^{\prime }s$ and the $B_{j}^{\prime }s$. However, the general
case remained already complicated. The authors have been thus restricted to
special cases where the matrices $A_{j}$ and $B_{j}$ are scalar polynomials
based on spacial and fixed matrices $A\ $and $B.$ Denote by $\sigma \left(
A\right) $ the spectrum of $A$ and $\sigma (B)$ the one of $B$, the spectrum
$\sigma (G)$ may then be determined in terms these spectra. Indeed, with the
assumptions the $A_{j}^{\prime }s$ and the $B_{j}^{\prime }s,$ the equation $%
\left( \ref{LSG1}\right) $ can be writen as

\begin{equation*}
\sum_{j,k}\alpha _{j,k}A^{j}XB^{k}=C,
\end{equation*}%
where $\alpha _{jk}$ are complexe numbers. Hence, the tensor matrix $G$ will
be written on the form $G=\phi \left( A,B\right) ,$ where $\phi $ is the
2-variables polynomial $\phi \left( x,y\right) =\sum_{j,k}\alpha
_{j,k}x^{i}y^{k}.$Thus, $G$ is singular if and only if $\phi \left( \lambda
,\mu \right) =0$ for some eigenvalues $\lambda $ and $\mu $ of $A$ and $B$
respectively.

In the case of square matrices an other criterion of existence of the
solution of $\left( \ref{LSG1}\right) $ was pointed out by Roth \cite{Roth}.
It was shown that the solution is unique if and only if the matrices%
\begin{equation*}
\left[
\begin{array}{cc}
A & C \\
0 & B%
\end{array}%
\right] ,\ \ \ \text{and \ \ }\left[
\begin{array}{cc}
A & 0 \\
0 & B%
\end{array}%
\right]
\end{equation*}%
are similar.

However, in numerical stydies of PDE's we may be confronted with matrices $G$
where the computation of spectral properties are not easy and necessitate
enormes calculus and sometimes, induces slow algorithms and bed convergence
rates. Thus, one motivation here and issued from \cite{Bezia} consists to
resolve such problem and prove the invertibility of the algebraic operator
yielded in the numerical scheme by applaying topological method Instead of
using classical ones such as tri-diagonal transformations. We thus aim to
prove that generilazed Lyapunov-Sylvester operators can be good candidates
for investigating numerical solutions of PDEs in multi-dimensional spaces.

The paper is organized as follows. In next section the discretization of $%
\left( \ref{Problem1}\right) $ is developed, the solvability of the scheme
is analyzed. In section 4, the consistency of the method is shown and next,
the stability and convergence are proved based on Lyapunov method and
Lax-Richtmyer theorem. Finally, a numerical implementation is provided
leading to the computation of the numerical solution and error estimates.

\section{The numerical scheme}

Let $J\in \mathbb{N}^{\ast }$ and $h=\frac{L_{1}-L_{0}}{J}$ be the space
step. Denote for $\left( j,m\right) \in I^{2}=\left\{ 0,1,...,J\right\}
^{2}, $ $x_{j}=L_{0}+jh$ and $y_{m}=L_{0}+mh$. Next, let $l=\Delta t$ be the
time step and $t_{n}=t_{0}+nl,$ $n\in \mathbb{N}$ be the time grid. We
denote also $\widetilde{\Omega }=\left\{ \left( x_{j},y_{m},t_{n}\right)
;\left( x_{j},y_{m},t_{n}\right) \in I^{2}\times \mathbb{N}\right\} $ the
associated discrete space. Finally, for $\left( j,m\right) \in I^{2}$ and $%
n\geq 0$, $u_{j,m}^{n}$ denotes the net function $u(x_{j},y_{m},t_{n})$ and $%
U_{j,m}^{n} $ is the numerical solution.

The following discrete approximations will be applied for the different
differential operators involved in the problem. For time derivatives, we set
\begin{equation*}
\frac{\partial u}{\partial t}\leadsto \frac{U_{j,m}^{n+1}-U_{j,m}^{n-1}}{2l}
\end{equation*}%
and for space derivatives, we shall use for the one order
\begin{equation*}
\frac{\partial u}{\partial x}\leadsto \frac{U_{j+1,m}^{n}-U_{j-1,m}^{n}}{2h}%
,\ \hbox{ \ \ and \ \ }\frac{\partial u}{\partial y}\leadsto \frac{%
U_{j,m+1}^{n}-U_{j,m-1}^{n}}{2h}
\end{equation*}%
and for the second order ones, we set%
\begin{equation*}
\frac{\partial ^{2}u}{\partial x^{2}}\leadsto \frac{U_{j+1,m}^{n,\alpha
}-2U_{j,m}^{n,\alpha }+U_{j-1,m}^{n,\alpha }}{h^{2}},\ \hbox{and\ \ \ }\frac{%
\partial ^{2}u}{\partial y^{2}}\leadsto \frac{U_{j,m+1}^{n,\alpha
}-2U_{j,m}^{n,\alpha }+U_{j,m-1}^{n,\alpha }}{h^{2}}.
\end{equation*}%
Finally, for $n\in \mathbb{N}^{\ast }$ and $\alpha \in \mathbb{R}$, we
denote
\begin{equation*}
U^{n,\alpha }=\alpha U^{n-1}+\left( 1-2\alpha \right) U^{n}+\alpha U^{n+1}
\end{equation*}%
to designate the estimation of $U_{j,m}^{n}$ with an $\alpha $%
-extrapolation/interpolation barycenter method. By applying these discrete
approximations, we obtain
\begin{eqnarray*}
U_{j,m}^{n+1}-U_{j,m}^{n-1} &=&\frac{2l}{h^{2}}\left[ V_{j-1,m}^{n,\beta
}-2V_{j,m}^{n,\beta }+V_{j+1,m}^{n,\beta }+V_{j,m-1}^{n,\beta
}-2V_{j,m}^{n,\beta }+V_{j,m+1}^{n,\beta }\right]  \\
&&+\frac{2l}{h^{2}}\lambda \left[ \frac{1}{4}\left(
U_{j+1,m}^{n}-U_{j-1,m}^{n}\right) ^{2}+\frac{1}{4}\left(
U_{j,m+1}^{n}-U_{j,m-1}^{n}\right) ^{2}\right] .
\end{eqnarray*}%
In what follows, we set
\begin{equation*}
F_{j,m}^{n}=\frac{1}{4}\left[ \left( U_{j+1,m}^{n}-U_{j-1,m}^{n}\right)
^{2}+\left( U_{j,m+1}^{n}-U_{j,m-1}^{n}\right) ^{2}\right]
\end{equation*}%
We thus get
\begin{eqnarray*}
U_{j,m}^{n+1}-U_{j,m}^{n-1} &=&\sigma \lbrack \beta V_{j-1,m}^{n+1}+\left(
1-2\beta \right) V_{j-1,m}^{n}+\beta V_{j-1,m}^{n-1} \\
&&-2\beta V_{j,m}^{n+1}-2\left( 1-2\beta \right) V_{j,m}^{n}-2\beta
V_{j,m}^{n-1} \\
&&+\beta V_{j+1,m}^{n+1}+\left( 1-2\beta \right) V_{j+1,m}^{n}+\beta
V_{j+1,m}^{n-1} \\
&&+\beta V_{j,m-1}^{n+1}+\left( 1-2\beta \right) V_{j,m-1}^{n}+\beta
V_{j,m-1}^{n-1} \\
&&-2\beta V_{j,m}^{n+1}-2\left( 1-2\beta \right) V_{j,m}^{n}-2\beta
V_{j,m}^{n-1} \\
&&+\beta V_{j,m+1}^{n+1}+\left( 1-2\beta \right) V_{j,m+1}^{n}+\beta
V_{j,m+1}^{n-1}]+\sigma \lambda F_{j,m}^{n}
\end{eqnarray*}%
where $\sigma =\frac{2l}{h^{2}}$. Otherwise, this can be written as
\begin{eqnarray*}
&&U_{j,m}^{n+1}-\sigma \beta \left[
V_{j-1,m}^{n+1}-2V_{j,m}^{n+1}+V_{j+1,m}^{n+1}+V_{j,m-1}^{n+1}-2V_{j,m}^{n+1}+V_{j,m+1}^{n+1}%
\right]  \\
&=&U_{j,m}^{n-1}+\sigma \left( 1-2\beta \right) \left[
V_{j-1,m}^{n}-2V_{j,m}^{n}+V_{j+1,m}^{n}+V_{j,m-1}^{n}-2V_{j,m}^{n}+V_{j,m+1}^{n}%
\right]  \\
&&+\sigma \beta \left[
V_{j-1,m}^{n-1}-2V_{j,m}^{n-1}+V_{j+1,m}^{n-1}+V_{j,m-1}^{n-1}-2V_{j,m}^{n-1}+V_{j,m+1}^{n-1}%
\right] +\sigma \lambda F_{j,m}^{n}.
\end{eqnarray*}%
Taking into account the boundary conditions, we obtain the full matrix
expression

\begin{eqnarray}
U^{n+1}-\sigma \beta \left( AV^{n+1}+V^{n+1}A^{T}\right) &=&U^{n-1}+\sigma
\left( 1-2\beta \right) \left( AV^{n}+V^{n}A^{T}\right)  \notag \\
&&+\sigma \beta \left( AV^{n-1}+V^{n-1}A^{T}\right) +\sigma \lambda F^{n}
\label{FinalDiscrete1}
\end{eqnarray}%
where
\begin{equation*}
A=\left(
\begin{array}{cccccc}
-2 & 2 & 0 & \cdots & \cdots & 0 \\
1 & -2 & 1 & 0 & \cdots & 0 \\
0 & \ddots & \ddots & \ddots & \ddots & \vdots \\
\vdots & \ddots & \ddots & \ddots & \ddots & 0 \\
0 & \cdots & 0 & 1 & -2 & 1 \\
0 & \cdots & \cdots & 0 & 2 & -2%
\end{array}%
\right)
\end{equation*}%
$U^{n}=\left( U_{j,m}^{n}\right) _{0\leq j,m\leq J}$ and $V=\left(
V_{j,m}^{n}\right) _{0\leq j,m\leq J}$.

Next, for $Q\in M_{\left( J+1\right) ^{2}}\left( \mathbb{R}\right) ,$ we
denote $\mathcal{L}_{Q}$ the linear operator which associates to $X\in
M_{\left( J+1\right) ^{2}}\left( \mathbb{R}\right) $ its image $\mathcal{L}%
_{Q}\left( X\right) =QX+XQ^{T}$. Thus, equation (\ref{FianlDiscrete1}) will
be written as
\begin{equation}
U^{n+1}-\sigma \beta \mathcal{L}_{A}\left( V^{n+1}\right) =U^{n-1}+\sigma
\left( 1-2\beta \right) \mathcal{L}_{A}\left( V^{n}\right) +\sigma \beta
\mathcal{L}_{A}\left( V^{n-1}\right) +\sigma \lambda F^{n}.
\end{equation}%
Now, applying similar techniques as previously we obtain
\begin{equation*}
V_{j,m}^{n,\beta }=qU_{j,m}^{n,\alpha }-\frac{\kappa }{h^{2}}\left(
U_{j-1,m}^{n,\alpha }-2U_{j,m}^{n,\alpha }+U_{j+1,m}^{n,\alpha
}+U_{j,m-1}^{n,\alpha }-2U_{j,m}^{n,\alpha }+U_{j,m+1}^{n,\alpha }\right) .
\end{equation*}%
Let $\delta =\frac{1}{h^{2}}$. This results in the following equation
\begin{eqnarray*}
\beta V_{j,m}^{n+1}+\left( 1-2\beta \right) V_{j,m}^{n}+\beta V_{j,m}^{n-1}
&=&q\left[ \alpha U_{j,m}^{n+1}+\left( 1-2\alpha \right) U_{j,m}^{n}+\alpha
U_{j,m}^{n-1}\right]  \\
&&-\delta \kappa \lbrack \alpha U_{j-1,m}^{n+1}+\left( 1-2\alpha \right)
U_{j-1,m}^{n}+\alpha U_{j-1,m}^{n-1} \\
&&-2\alpha U_{j,m}^{n+1}-2\left( 1-2\alpha \right) U_{j,m}^{n}-2\alpha
U_{j,m}^{n-1} \\
&&+\alpha U_{j+1,m}^{n+1}+\left( 1-2\alpha \right) U_{j+1,m}^{n}+\alpha
U_{j+1,m}^{n-1} \\
&&+\alpha U_{j,m-1}^{n+1}+\left( 1-2\alpha \right) U_{j,m-1}^{n}+\alpha
U_{j,m-1}^{n-1} \\
&&-2\alpha U_{j,m}^{n+1}-2\left( 1-2\alpha \right) U_{j,m}^{n}-2\alpha
U_{j,m}^{n-1} \\
&&+\alpha U_{j,m+1}^{n+1}+\left( 1-2\alpha \right) U_{j,m+1}^{n}+\alpha
U_{j,m+1}^{n-1}],
\end{eqnarray*}%
or equivalently,

\begin{eqnarray*}
&&\beta V_{j,m}^{n+1}-q\alpha U_{j,m}^{n+1} \\
&&+\delta \kappa \alpha \left[
U_{j-1,m}^{n+1}-2U_{j,m}^{n+1}+U_{j+1,m}^{n+1}+U_{j,m-1}^{n+1}-2U_{j,m}^{n+1}+U_{j,m+1}^{n+1}%
\right] \\
&=&-\left( 1-2\beta \right) V_{j,m}^{n}-\beta V_{j,m}^{n-1}+q\left[ \left(
1-2\alpha \right) U_{j,m}^{n}+\alpha U_{j,m}^{n-1}\right] \\
&&-\delta \kappa \lbrack \left( 1-2\alpha \right) \left[
U_{j-1,m}^{n}-2U_{j,m}^{n}+U_{j+1,m}^{n}+U_{j,m-1}^{n}-2U_{j,m}^{n}+U_{j,m+1}^{n}%
\right] \\
&&+\left[
U_{j-1,m}^{n-1}-2U_{j,m}^{n-1}+U_{j+1,m}^{n-1}+U_{j,m-1}^{n-1}-2U_{j,m}^{n-1}+U_{j,m+1}^{n-1}%
\right] ].
\end{eqnarray*}%
Now, applying the boundary conditions, we obtain
\begin{eqnarray}
&&\beta V^{n+1}-\alpha \left[ qU^{n+1}-\delta \kappa \mathcal{L}_{A}\left(
U^{n+1}\right) \right]  \notag \\
&=&\left( 1-2\alpha \right) \left( qU^{n}-\delta \kappa \mathcal{L}%
_{A}\left( U^{n}\right) \right) +\alpha \left( qU^{n-1}-\delta \kappa
\mathcal{L}_{A}\left( U^{n-1}\right) \right)  \notag \\
&&-\left( 1-2\beta \right) V^{n}-\beta V^{n-1}.
\end{eqnarray}%
As a result, we obtain finally the following discrete coupled system.
\begin{equation}
\left\{
\begin{array}{l}
U^{n+1}-\sigma \beta \mathcal{L}_{A}\left( V^{n+1}\right) =U^{n-1}+\sigma
\left( 1-2\beta \right) \mathcal{L}_{A}\left( V^{n}\right) +\sigma \beta
\mathcal{L}_{A}\left( V^{n-1}\right) +\sigma \lambda F^{n} \\
\beta V^{n+1}-\alpha \left[ qU^{n+1}-\delta \kappa \mathcal{L}_{A}\left(
U^{n+1}\right) \right] =\left( 1-2\alpha \right) \left( qU^{n}-\delta \kappa
\mathcal{L}_{A}\left( U^{n}\right) \right) \\
\ \ \ \ \ \ \ \ \ \ \ \ \ \ \ \ \ \ \ \ \ \ \ \ \ \ \ \ \ \ \ \ \ \ \ \ \ \
\ \ \ \ \ \ \ \ \ \ \ \ \ \ \ \ +\alpha \left( qU^{n-1}-\delta \kappa
\mathcal{L}_{A}\left( U^{n-1}\right) \right) \\
\ \ \ \ \ \ \ \ \ \ \ \ \ \ \ \ \ \ \ \ \ \ \ \ \ \ \ \ \ \ \ \ \ \ \ \ \ \
\ \ \ \ \ \ \ \ \ \ \ \ \ \ \ \ \ -\left( 1-2\beta \right) V^{n}-\beta
V^{n-1}%
\end{array}%
\right.  \label{LyapunovSystem}
\end{equation}

\section{Solvability of the discret method}

In this section we will examine the solvability of the discrete scheme. the
main idea is by transforming the system $\left(\ref{LyapunovSystem}\right)$
to an equality of the forme $\left(U^{n+1},V^{n+1}\right)=\phi%
\left(U^{n},V^{n},U^{n-1},V^{n-1}\right)$ with an appropriate function $\phi$%
. We prove precisely that $\phi$ can be expressed with general
Lyapunov-Sylvester form. Next using general properties of such operators we
prove that $\phi $ is an isomorphism. In most studies, even recent ones such
as \cite{Benmabrouk2} the authors proved that $\phi$ or some modified
versions may be contractive by inserting translation-dilation parameters
leading to fixed point theory. In the present context it seams that such
transformation is not possible as $\left\Vert\phi\right\Vert$ may be greater
than $1$ and thus no contraction may occur. To overcome this problem we come
back to differential calculus and topological properties.

\begin{theorem}
The system $\left(\ref{LyapunovSystem}\right)$ is uniquely solvable whenever
$U^{0}$ and $U^{1}$ are known.
\end{theorem}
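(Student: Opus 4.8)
The plan is to observe that, once the past iterates $U^{n-1},V^{n-1},U^{n},V^{n}$ are fixed, the system $(\ref{LyapunovSystem})$ is an \emph{affine} system in the single pair of unknowns $(U^{n+1},V^{n+1})$: the nonlinear term $F^{n}$ depends only on $U^{n}$ and hence enters the right-hand side as a known quantity. Writing the left-hand side as the block linear operator
\[
\mathcal{M}(U,V)=\big(U-\sigma\beta\,\mathcal{L}_{A}(V),\ \beta V-\alpha q\,U+\alpha\delta\kappa\,\mathcal{L}_{A}(U)\big)
\]
on $M_{J+1}(\mathbb{R})\times M_{J+1}(\mathbb{R})$, the theorem reduces to showing that $\mathcal{M}$ is an isomorphism; the update map $\phi$ is then $\mathcal{M}^{-1}$ applied to the known data, and existence and uniqueness follow at once.

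First I would eliminate $U^{n+1}$. The first block yields $U^{n+1}=\sigma\beta\,\mathcal{L}_{A}(V^{n+1})+R_{1}$ with $R_{1}$ known; substituting into the second block collapses the whole system into a single generalized Lyapunov--Sylvester equation
\[
\mathcal{T}(V^{n+1})=C_{n},\qquad \mathcal{T}=\beta\,\mathrm{Id}-\alpha q\sigma\beta\,\mathcal{L}_{A}+\alpha\delta\kappa\sigma\beta\,\mathcal{L}_{A}^{2},
\]
where $C_{n}$ gathers all the known terms from times $n$ and $n-1$. Thus $\mathcal{M}$ is invertible exactly when the operator $\mathcal{T}=P(\mathcal{L}_{A})$ is invertible, with $P(s)=\beta\big(1-\alpha q\sigma s+\alpha\delta\kappa\sigma s^{2}\big)$.

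To invert $\mathcal{T}$ I would exploit the structure of $\mathcal{L}_{A}$ rather than compute the spectrum of the large Kronecker matrix $G$ of the introduction. The matrix $A$ is the discrete Neumann Laplacian; although not symmetric, a diagonal similarity symmetrizes it, so $\sigma(A)$ is real and nonpositive. By the classical description of the Sylvester operator, $\mathcal{L}_{A}$ is diagonalizable and $\sigma(\mathcal{L}_{A})=\{\mu_{i}+\mu_{j}:\mu_{i},\mu_{j}\in\sigma(A)\}\subset(-\infty,0]$. Hence $\sigma(\mathcal{T})=\{P(s):s\in\sigma(\mathcal{L}_{A})\}$, and it suffices to verify $P(s)\neq0$ for every $s\le0$. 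Since $M_{J+1}(\mathbb{R})$ is finite dimensional, $0\notin\sigma(\mathcal{T})$ makes $\mathcal{T}$ injective, hence bijective; equivalently, the continuous injective linear self-map $\mathcal{T}$ is a homeomorphism, which is the topological content that replaces the failed contraction argument and, for the genuinely nonlinear solution map $\phi$, can be promoted to a global diffeomorphism via properness and the global inverse function theorem.

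The main obstacle is the sign analysis of $P$ on $(-\infty,0]$. When $\sigma,\delta,\kappa,\alpha>0$ and $q\ge0$ one has $P(s)\ge\beta$ for all $s\le0$, so $\beta\neq0$ already forces invertibility; the delicate case is $q<0$, where the linear term of $P$ turns negative and both roots of the quadratic may lie in $(-\infty,0]$, so one must impose a discriminant (CFL--type) condition such as $\alpha q^{2}\sigma<4\delta\kappa$ to keep $P$ from vanishing on $\sigma(\mathcal{L}_{A})$. Establishing the nonpositivity of $\sigma(A)$ for the nonsymmetric Neumann matrix, and pinning down precisely which sign conditions on $(\alpha,\beta,q,\kappa)$ and on the mesh ratios guarantee $P\neq0$ throughout $\sigma(\mathcal{L}_{A})$, is where the real work lies.
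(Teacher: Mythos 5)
Your reduction is the same as the paper's: eliminating $U^{n+1}$ from the first block turns the step into a single generalized Lyapunov--Sylvester equation governed by $\mathcal{T}=\beta\bigl(\mathrm{Id}-\sigma\alpha\,\Gamma\mathcal{L}_{A}\bigr)=\beta\bigl(\mathrm{Id}-\sigma\alpha q\,\mathcal{L}_{A}+\sigma\alpha\delta\kappa\,\mathcal{L}_{A}^{2}\bigr)$, which is exactly the operator $\mathcal{K}$ (up to the factor $\beta$) that the paper studies, and in finite dimension injectivity of the affine step map indeed settles existence and uniqueness. Where you genuinely diverge is in how invertibility is obtained. The paper imposes the mesh coupling $l=o(h^{4+s})$, estimates $\|(\mathcal{K}-I)X\|\leq 8\sigma|\alpha|\,[\,|q|+8\delta|\kappa|\,]\|X\|$, concludes that $\mathcal{K}$ converges uniformly to the identity, and invokes its Lemma~2 (a uniform limit argument) -- no spectral information about $A$ is ever used, and no sign conditions on $q,\kappa,\alpha,\beta$ are needed. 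You instead propose a spectral-mapping argument: symmetrize $A$, get $\sigma(\mathcal{L}_{A})=\sigma(A)+\sigma(A)\subset(-\infty,0]$, and check that $P(s)=\beta(1-\alpha q\sigma s+\alpha\delta\kappa\sigma s^{2})$ does not vanish there. If completed, this buys something real: for $q\geq0$, $\kappa>0$, $\alpha>0$, $\beta\neq0$ it gives unconditional solvability, with no restriction tying $l$ to $h$, and it identifies precisely how solvability can fail otherwise.

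The problem is that your write-up stops exactly at the load-bearing steps, and as stated it does not prove the theorem the paper asserts. First, the nonpositivity and semisimplicity of $\sigma(A)$ are only asserted; they are true (the eigenvectors are $v_{k}(j)=\cos(k\pi j/J)$ with eigenvalues $2(\cos(k\pi/J)-1)\in[-4,0]$, so in fact $\sigma(\mathcal{L}_{A})\subset[-8,0]$), but a Gershgorin-type appeal to the symmetrized matrix alone does not give nonpositivity, so this must be argued. Second, the theorem carries no sign hypotheses on $q,\kappa$, and for unfavorable signs your quadratic $P$ can genuinely vanish at a point of $\sigma(\mathcal{L}_{A})$, so some hypothesis is unavoidable; the paper supplies it as $l=o(h^{4+s})$, which forces $\sigma|q|$ and $\sigma\delta|\kappa|$ to $0$. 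Had you observed the boundedness $|s|\leq 8$ on $\sigma(\mathcal{L}_{A})$, the same mesh condition would make $P$ uniformly close to $\beta\neq0$ and your route would close at once; as written, the sign analysis you defer (``where the real work lies'') is the proof, not a refinement of it. Two smaller points: the case $\beta=0$ needs the separate treatment the paper gives (your $\mathcal{T}$ carries a factor $\beta$, and the second equation then no longer determines $V^{n+1}$), and the closing appeal to properness and a global inverse function theorem is unnecessary -- as you yourself note, the step map is affine in $(U^{n+1},V^{n+1})$, so linear invertibility is all that is ever needed.
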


The proof of this result is based on the following preliminary lemma.

\begin{lemma}
Let $E$ be a finite dimensional vector space on $\mathbb{R}$ (or $\mathbb{C}$%
), and $\left(\phi_{n}\right)_{n}$ be a sequence of endomorphisms converging
uniformly to an invertible endomorphism $\phi$. Then there exist $n_{0}$
such that the endomorphism $\phi_{n}$ is invertible for all $n\geq n_{0}$.
\end{lemma}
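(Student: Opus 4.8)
The plan is to use the fact that the set of invertible endomorphisms is open in $\mathrm{End}(E)$ together with the continuity of the determinant. First I would fix a norm $\left\Vert \cdot \right\Vert$ on the finite-dimensional space $\mathrm{End}(E)$; since $E$ is finite dimensional, all norms on $\mathrm{End}(E)$ are equivalent, so the notion of uniform convergence $\phi_n \to \phi$ is unambiguous and agrees with convergence in any operator norm. The hypothesis that $\phi$ is invertible means $\det(\phi) \neq 0$, where I identify each endomorphism with its matrix in some fixed basis of $E$.

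The key step is to exploit continuity of the determinant map. The determinant $\det\colon \mathrm{End}(E) \to \mathbb{R}$ (or $\mathbb{C}$) is a polynomial in the matrix entries, hence continuous with respect to the chosen norm. Since the entries of $\phi_n$ converge to those of $\phi$ (uniform convergence in finite dimension forces entrywise convergence), continuity gives $\det(\phi_n) \to \det(\phi)$. Because $\det(\phi) \neq 0$, I would set $\varepsilon = \tfrac{1}{2}\left\vert \det(\phi)\right\vert > 0$ and invoke the definition of convergence: there exists $n_0$ such that for all $n \geq n_0$ one has $\left\vert \det(\phi_n) - \det(\phi)\right\vert < \varepsilon$, whence $\left\vert \det(\phi_n)\right\vert > \tfrac{1}{2}\left\vert \det(\phi)\right\vert > 0$. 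A nonzero determinant forces $\phi_n$ to be invertible, which is exactly the claim.

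An alternative route, slightly more self-contained and avoiding determinants, is the Neumann-series argument. I would write $\phi_n = \phi(\mathrm{Id} - \phi^{-1}(\phi - \phi_n))$ and observe that $\left\Vert \phi^{-1}(\phi - \phi_n)\right\Vert \leq \left\Vert \phi^{-1}\right\Vert\,\left\Vert \phi - \phi_n\right\Vert$. Uniform convergence supplies an $n_0$ with $\left\Vert \phi - \phi_n\right\Vert < 1/\left\Vert \phi^{-1}\right\Vert$ for all $n \geq n_0$, so that $\left\Vert \phi^{-1}(\phi - \phi_n)\right\Vert < 1$; then $\mathrm{Id} - \phi^{-1}(\phi - \phi_n)$ is invertible via the convergent geometric (Neumann) series $\sum_{k\geq 0}\bigl(\phi^{-1}(\phi-\phi_n)\bigr)^k$, and $\phi_n$, being a composition of two invertibles, is invertible. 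This version has the advantage of producing an explicit bound on $n_0$ and even an estimate on $\left\Vert \phi_n^{-1}\right\Vert$, which may be useful downstream when the lemma is applied to the discretization operators.

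I do not anticipate a genuine obstacle here: the statement is a standard openness-of-the-invertible-locus fact, and both arguments are short. The only point demanding a little care is the passage from \emph{uniform convergence of endomorphisms} to convergence in a fixed operator norm; I would dispatch this at the outset by the equivalence of norms in finite dimension, since without finite-dimensionality the Neumann argument would still work but the determinant argument would not. I would therefore present the determinant proof as the main line for its brevity, mentioning the Neumann variant only if an explicit quantitative $n_0$ is needed elsewhere in the paper.
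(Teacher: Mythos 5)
Your proposal is correct, and in fact it supplies something the paper itself omits: the paper states this lemma as a known preliminary fact and never proves it --- the proof environment that follows it in the text is really the proof of the solvability theorem (introducing $\phi(X,Y)=\bigl(X-\sigma\beta\mathcal{L}_{A}(Y),\,\beta Y-\alpha\Gamma(X)\bigr)$ and showing that $\mathcal{K}=I-\sigma\alpha\Gamma\mathcal{L}_{A}$ tends uniformly to the identity as $h\to 0$ with $l=o(h^{4+s})$), with the lemma invoked as the standard openness of the invertible locus in $\mathrm{End}(E)$. So there is no competing argument in the paper to compare against; both of your routes are complete. Your determinant argument is sound (uniform convergence in finite dimension gives entrywise convergence, $\det$ is a polynomial in the entries, so $\det(\phi_{n})\to\det(\phi)\neq 0$), and your Neumann-series argument is the one better matched to how the lemma is actually used downstream: writing $\phi_{n}=\phi\bigl(\mathrm{Id}-\phi^{-1}(\phi-\phi_{n})\bigr)$ gives not only invertibility for $\Vert\phi-\phi_{n}\Vert<1/\Vert\phi^{-1}\Vert$ but also the bound $\Vert\phi_{n}^{-1}\Vert\leq\Vert\phi^{-1}\Vert/\bigl(1-\Vert\phi^{-1}\Vert\,\Vert\phi-\phi_{n}\Vert\bigr)$, and it is precisely an estimate of this kind (with $\phi=\mathrm{Id}$ and $\phi_{n}=\mathcal{K}$) that the stability section uses implicitly when it passes from $\Vert\mathcal{K}(U^{1})\Vert$ to $\tfrac{1}{2}\Vert U^{1}\Vert$ and from $\Vert\beta\mathcal{K}(V^{1})\Vert$ to $\tfrac{\vert\beta\vert}{2}\Vert V^{1}\Vert$. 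The determinant proof, while shorter, yields no quantitative control of the inverse, so if you present only one argument in this context, the Neumann variant is the more useful choice.
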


\begin{proof}
Consider the endomorphism $\phi $ on $M_{\left( J+1\right) ^{2}}\left(
\mathbb{R}\right) \times M_{\left( J+1\right) ^{2}}\left( \mathbb{R}\right) $
defined by
\begin{equation}
\phi \left( X,Y\right) =\left( X-\sigma \beta \mathcal{L}_{A}\left( Y\right)
,\beta Y-\alpha \Gamma \left( X\right) \right)
\end{equation}%
where $\Gamma \left( X\right) =qX-\delta \kappa \mathcal{L}_{A}\left(
X\right) .$ To prove Theorem 2, we show that $\phi $ is a one to one, and so
$\ker \phi $ is reduced to $0.$ Indeed,%
\begin{equation*}
\phi \left( X,Y\right) =0\Leftrightarrow \left( X-\sigma \beta \mathcal{L}%
_{A}\left( Y\right) ,\beta Y-\alpha \left[ qX-\delta \kappa \mathcal{L}%
_{A}\left( X\right) \right] \right) =\left( 0,0\right) .
\end{equation*}%
Which is equivalent to
\begin{equation}
\left\{
\begin{array}{l}
X=\sigma \beta \mathcal{L}_{A}\left( Y\right) \\
\beta Y=\alpha \Gamma \left( X\right)%
\end{array}%
\right.  \label{SolvabiltySystem}
\end{equation}%
for $\beta \neq 0$ we get%
\begin{equation}
Y=\sigma \alpha \Gamma \mathcal{L}_{A}\left( Y\right)
\end{equation}%
Choosing $l=o\left( h^{s+4}\right) $ (which is always possible), the
operator $\mathcal{K}=I-\sigma \alpha \Gamma \mathcal{L}_{A}$ tends
uniformly to $I$ whenever $h$ tends to zero. Indeed, denote%
\begin{equation*}
W=\sigma \alpha \left( qA-\delta \kappa A^{2}\right) =\frac{2l}{h^{2}}\alpha
\left( qA-\frac{1}{h^{2}}\kappa A^{2}\right) \hbox{.}
\end{equation*}%
We have
\begin{equation*}
\mathcal{K}\left( X\right) =X-\sigma \alpha \Gamma \mathcal{L}_{A}\left(
X\right) =Y-WX-XW^{T}+2\sigma \alpha \delta \kappa AXA^{T}.
\end{equation*}%
\newline
thus,
\begin{eqnarray*}
\left\Vert \left( \mathcal{K}-I\right) X\right\Vert &=&\left\Vert
WX+XW^{T}+2\left( \sigma \alpha \delta \kappa \right) AXA^{T}\right\Vert \\
&\leq &2\left\Vert W\right\Vert \left\Vert X\right\Vert +32\sigma \alpha
\delta \left\vert \kappa \right\vert \left\Vert X\right\Vert
\end{eqnarray*}%
\newline
Since we have $\left\Vert W\right\Vert \leq 4\sigma \left\vert \alpha
\right\vert \left[ \left\vert q\right\vert +4\delta \left\vert \kappa
\right\vert \right] ,$ we obtain,%
\begin{equation*}
\left\Vert \left( \mathcal{K}-I\right) X\right\Vert \leq 8\sigma \alpha %
\left[ \left\vert q\right\vert +8\delta \left\vert \kappa \right\vert \right]
\left\Vert X\right\Vert
\end{equation*}%
For $l=o\left( h^{4+s}\right) $ this implies that,%
\begin{equation*}
\left\Vert \left( \mathcal{K}\left( X\right) -I\left( X\right) \right)
\right\Vert \leq 16\alpha \left[ \left\vert q\right\vert
h^{2+s}+8h^{s}\left\vert \kappa \right\vert \right] \left\Vert X\right\Vert
\end{equation*}%
Consequentely the operator $\mathcal{K}$ converge uniformly to the identity
whenevr $h$ tends towred $0$ and $l=o\left( h^{4+s}\right) ,$ withs $s>0.$
Thus, using Lemma 1 $\phi $ is invertible for $l$,$h$ small enough with $%
l=o\left( h^{4+s}\right) .$\newline
For $\beta =0$ we obtain the system%
\begin{equation}
\left\{
\begin{array}{l}
U^{n+1}=U^{n-1}+\sigma \mathcal{L}_{A}\left( V^{n}\right) +\sigma \lambda
F^{n} \\
V^{n}=\alpha \Gamma \left( U^{n+1}\right) +\left( 1-2\alpha \right) \Gamma
\left( U^{n}\right) +\alpha \Gamma \left( U^{n-1}\right)%
\end{array}%
\right.  \label{ThesystemB=0}
\end{equation}%
and thus,%
\begin{equation}
U^{n+1}-\sigma \alpha \Gamma \mathcal{L}_{A}\left( U^{n+1}\right) =\sigma
\lbrack \left( 1-2\alpha \right) \Gamma \mathcal{L}_{A}\left( U^{n}\right)
+U^{n-1}+\alpha \Gamma \mathcal{L}_{A}\left( U^{n-1}\right) +\lambda F^{n}]
\end{equation}%
For the same assumption on $l$ and $h$ as above the same operator $\mathcal{K%
}\left( X\right) =X-\sigma \alpha \Gamma \mathcal{L}_{A}\left( X\right) $
tends toward the identity as $h$ tends to $0$.
\end{proof}

\section{Consistency}

The consistency of the proposed method will be proved by evaluating the
local truncation error arising from the scheme introduced for the
discretization of the system $\left( \ref{Problem1}\right) $

Applying Taylor Taylor's expansion, and assuming that $u$ and $v$ to be
sufficiently differentiable, we get
\begin{equation}
U_{j,m}^{n+1}=u+l\frac{\partial u}{\partial t}+\frac{l^{2}}{2}\frac{\partial
^{2}u}{\partial t^{2}}+\frac{l^{3}}{6}\frac{\partial ^{3}u}{\partial t^{3}}+%
\frac{l^{4}}{24}\frac{\partial ^{4}u}{\partial t^{4}}+...  \label{Un+1jm}
\end{equation}%
Similarly,
\begin{equation}
U_{j,m}^{n-1}=u-l\frac{\partial u}{\partial t}+\frac{l^{2}}{2}\frac{\partial
^{2}u}{\partial t^{2}}-\frac{l^{3}}{6}\frac{\partial ^{3}u}{\partial t^{3}}+%
\frac{l^{4}}{24}\frac{\partial ^{4}u}{\partial t^{4}}+...  \label{Un-1jm}
\end{equation}%
Hence,
\begin{equation}
\frac{U_{j,m}^{n+1}-U_{j,m}^{n-1}}{2l}=\frac{\partial u}{\partial t}+\frac{%
l^{2}}{6}\frac{\partial ^{3}u}{\partial t^{3}}+...
\end{equation}%
Next, we get also
\begin{eqnarray}
V_{j-1,m}^{n-1} &=&\left[ v-l\frac{\partial v}{\partial t}+\frac{l^{2}}{2}%
\frac{\partial ^{2}v}{\partial t^{2}}-\frac{l^{3}}{6}\frac{\partial ^{3}v}{%
\partial t^{3}}+\frac{l^{4}}{24}\frac{\partial ^{4}v}{\partial t^{4}}\right]
\notag \\
&&-h\left[ \frac{\partial v}{\partial x}-l\frac{\partial ^{2}v}{\partial
t\partial x}+\frac{l^{2}}{2}\frac{\partial ^{3}v}{\partial t^{2}\partial x}-%
\frac{l^{3}}{6}\frac{\partial ^{4}v}{\partial t^{3}\partial x}+\frac{l^{4}}{%
24}\frac{\partial ^{5}v}{\partial t^{4}\partial x}\right]  \notag \\
&&+\frac{h^{2}}{2}\left[ \frac{\partial ^{2}v}{\partial x^{2}}-l\frac{%
\partial ^{3}v}{\partial t\partial x^{2}}+\frac{l^{2}}{2}\frac{\partial ^{4}v%
}{\partial t^{2}\partial x^{2}}-\frac{l^{3}}{6}\frac{\partial ^{5}v}{%
\partial t^{3}\partial x^{2}}+\frac{l^{4}}{24}\frac{\partial ^{6}v}{\partial
t^{4}\partial x^{2}}\right]  \notag \\
&&-\frac{h^{3}}{6}\left[ \frac{\partial ^{3}v}{\partial x^{3}}-l\frac{%
\partial ^{4}v}{\partial t\partial x^{3}}+\frac{l^{2}}{2}\frac{\partial ^{5}v%
}{\partial t^{2}\partial x^{3}}-\frac{l^{3}}{6}\frac{\partial ^{6}v}{%
\partial t^{3}\partial x^{3}}+\frac{l^{4}}{24}\frac{\partial ^{7}v}{\partial
t^{4}\partial x^{3}}\right]  \notag \\
&&+\frac{h^{4}}{24}\left[ \frac{\partial ^{4}v}{\partial x^{4}}-l\frac{%
\partial ^{5}v}{\partial t\partial x^{4}}+\frac{l^{2}}{2}\frac{\partial ^{6}v%
}{\partial t^{2}\partial x^{4}}-\frac{l^{3}}{6}\frac{\partial ^{7}v}{%
\partial t^{3}\partial x^{4}}+\frac{l^{4}}{24}\frac{\partial ^{8}v}{\partial
t^{4}\partial x^{4}}\right] +...
\end{eqnarray}%
and
\begin{equation}
V_{j-1,m}^{n}=v-h\frac{\partial v}{\partial x}+\frac{h^{2}}{2}\frac{\partial
^{2}v}{\partial x^{2}}-\frac{h^{3}}{6}\frac{\partial ^{3}v}{\partial x^{3}}+%
\frac{h^{4}}{24}\frac{\partial ^{4}v}{\partial x^{4}}+...
\end{equation}%
and finally,
\begin{eqnarray}
V_{j-1,m}^{n+1} &=&\left[ v+l\frac{\partial v}{\partial t}+\frac{l^{2}}{2}%
\frac{\partial ^{2}v}{\partial t^{2}}+\frac{l^{3}}{6}\frac{\partial ^{3}v}{%
\partial t^{3}}+\frac{l^{4}}{24}\frac{\partial ^{4}v}{\partial t^{4}}\right]
\notag \\
&&-h\left[ \frac{\partial v}{\partial x}+l\frac{\partial ^{2}v}{\partial
t\partial x}+\frac{l^{2}}{2}\frac{\partial ^{3}v}{\partial t^{2}\partial x}+%
\frac{l^{3}}{6}\frac{\partial ^{4}v}{\partial t^{3}\partial x}+\frac{l^{4}}{%
24}\frac{\partial ^{5}v}{\partial t^{4}\partial x}\right]  \notag \\
&&+\frac{h^{2}}{2}\left[ \frac{\partial ^{2}v}{\partial x^{2}}+l\frac{%
\partial ^{3}v}{\partial t\partial x^{2}}+\frac{l^{2}}{2}\frac{\partial ^{4}v%
}{\partial t^{2}\partial x^{2}}+\frac{l^{3}}{6}\frac{\partial ^{5}v}{%
\partial t^{3}\partial x^{2}}+\frac{l^{4}}{24}\frac{\partial ^{6}v}{\partial
t^{4}\partial x^{2}}\right]  \notag \\
&&-\frac{h^{3}}{6}\left[ \frac{\partial ^{3}v}{\partial x^{3}}+l\frac{%
\partial ^{4}v}{\partial t\partial x^{3}}+\frac{l^{2}}{2}\frac{\partial ^{5}v%
}{\partial t^{2}\partial x^{3}}+\frac{l^{3}}{6}\frac{\partial ^{6}v}{%
\partial t^{3}\partial x^{3}}+\frac{l^{4}}{24}\frac{\partial ^{7}v}{\partial
t^{4}\partial x^{3}}\right]  \notag \\
&&+\frac{h^{4}}{24}\left[ \frac{\partial ^{4}v}{\partial x^{4}}+l\frac{%
\partial ^{5}v}{\partial t\partial x^{4}}+\frac{l^{2}}{2}\frac{\partial ^{6}v%
}{\partial t^{2}\partial x^{4}}+\frac{l^{3}}{6}\frac{\partial ^{7}v}{%
\partial t^{3}\partial x^{4}}+\frac{l^{4}}{24}\frac{\partial ^{8}v}{\partial
t^{4}\partial x^{4}}\right] +...
\end{eqnarray}%
Thus,
\begin{eqnarray}
V_{j-1,m}^{n,\beta } &=&v+\beta l^{2}\frac{\partial ^{2}v}{\partial t^{2}}%
+\beta \frac{l^{4}}{12}\frac{\partial ^{4}v}{\partial t^{4}}  \notag \\
&&+h\left[ -\frac{\partial v}{\partial x}-\beta l^{2}\frac{\partial ^{3}v}{%
\partial t^{2}\partial x}-\beta \frac{l^{4}}{12}\frac{\partial ^{5}v}{%
\partial t^{4}\partial x}\right]  \notag \\
&&+\frac{h^{2}}{2}\left[ \frac{\partial ^{2}v}{\partial x^{2}}+\beta l^{2}%
\frac{\partial ^{4}v}{\partial t^{2}\partial x^{2}}+\beta \frac{l^{4}}{12}%
\frac{\partial ^{6}v}{\partial t^{4}\partial x^{2}}\right]  \notag \\
&&+\frac{h^{3}}{6}\left[ -\frac{\partial ^{3}v}{\partial x^{3}}-\beta l^{2}%
\frac{\partial ^{5}v}{\partial t^{2}\partial x^{3}}-\beta \frac{l^{4}}{12}%
\frac{\partial ^{7}v}{\partial t^{4}\partial x^{3}}\right]  \notag \\
&&+\frac{h^{4}}{24}\left[ \frac{\partial ^{4}v}{\partial x^{4}}+\beta l^{2}%
\frac{\partial ^{6}v}{\partial t^{2}\partial x^{4}}+\beta \frac{l^{4}}{12}%
\frac{\partial ^{8}v}{\partial t^{4}\partial x^{4}}\right] +...
\end{eqnarray}%
Similarly,
\begin{equation}
V_{j,m}^{n,\beta }=v+\beta l^{2}\frac{\partial ^{2}v}{\partial t^{2}}+\beta
\frac{l^{4}}{12}\frac{\partial ^{4}v}{\partial t^{4}}+...
\end{equation}%
We have also
\begin{eqnarray}
V_{j+1,m}^{n,\beta } &=&v+\beta l^{2}\frac{\partial ^{2}v}{\partial t^{2}}%
+\beta \frac{l^{4}}{12}\frac{\partial ^{4}v}{\partial t^{4}}  \notag \\
&&+h\left[ \frac{\partial v}{\partial x}+\beta l^{2}\frac{\partial ^{3}v}{%
\partial t^{2}\partial x}+\beta \frac{l^{4}}{12}\frac{\partial ^{5}v}{%
\partial t^{4}\partial x}\right]  \notag \\
&&+\frac{h^{2}}{2}\left[ \frac{\partial ^{2}v}{\partial x^{2}}+\beta l^{2}%
\frac{\partial ^{4}v}{\partial t^{2}\partial x^{2}}+\beta \frac{l^{4}}{12}%
\frac{\partial ^{6}v}{\partial t^{4}\partial x^{2}}\right]  \notag \\
&&+\frac{h^{3}}{6}\left[ \frac{\partial ^{3}v}{\partial x^{3}}+\beta l^{2}%
\frac{\partial ^{5}v}{\partial t^{2}\partial x^{3}}+\beta \frac{l^{4}}{12}%
\frac{\partial ^{7}v}{\partial t^{4}\partial x^{3}}\right]  \notag \\
&&+\frac{h^{4}}{24}\left[ \frac{\partial ^{4}v}{\partial x^{4}}+\beta l^{2}%
\frac{\partial ^{6}v}{\partial t^{2}\partial x^{4}}+\beta \frac{l^{4}}{12}%
\frac{\partial ^{8}v}{\partial t^{4}\partial x^{4}}\right] +...
\end{eqnarray}%
Finally,
\begin{eqnarray}
\frac{V_{j,m-1}^{n,\beta }-2V_{j,m}^{n,\beta }+V_{j,m+1}^{n,\beta }}{h^{2}}
&=&\left[ \frac{\partial ^{2}v}{\partial y^{2}}+\beta l^{2}\frac{\partial
^{4}v}{\partial t^{2}\partial y^{2}}+\beta \frac{l^{4}}{12}\frac{\partial
^{6}v}{\partial t^{4}\partial y^{2}}\right]  \notag \\
&&+\frac{h^{2}}{12}\left[ \frac{\partial ^{4}v}{\partial y^{4}}+\beta l^{2}%
\frac{\partial ^{6}v}{\partial t^{2}\partial y^{4}}+\beta \frac{l^{4}}{12}%
\frac{\partial ^{8}v}{\partial t^{4}\partial y^{4}}\right] +...
\end{eqnarray}%
Now,%
\begin{eqnarray}
&&\frac{U_{j,m}^{n+1}-U_{j,m}^{n-1}}{2l}  \notag \\
&&-\left[ \frac{V_{j-1,m}^{n,\beta }-2V_{j,m}^{n,\beta }+V_{j+1,m}^{n,\beta }%
}{h^{2}}+\frac{V_{j,m-1}^{n,\beta }-2V_{j,m}^{n,\beta }+V_{j,m+1}^{n,\beta }%
}{h^{2}}\right]  \notag \\
&=&\frac{\partial u}{\partial t}-\Delta v+\frac{l^{2}}{6}\frac{\partial ^{3}u%
}{\partial t^{3}}-\beta l^{2}\frac{\partial ^{2}}{\partial t^{2}}\left(
\Delta v\right) -\frac{h^{2}}{12}\left( \frac{\partial ^{4}v}{\partial x^{4}}%
+\frac{\partial ^{4}v}{\partial y^{4}}\right)  \notag \\
&&+\beta l^{2}\frac{\partial ^{2}}{\partial t^{2}}\left( \frac{\partial ^{4}v%
}{\partial x^{4}}+\frac{\partial ^{4}v}{\partial y^{4}}\right) -\beta \frac{%
l^{4}}{12}\left[ \frac{\partial ^{6}v}{\partial t^{4}\partial x^{2}}+\frac{%
\partial ^{6}v}{\partial t^{4}\partial y^{2}}\right]  \notag \\
&&-\beta \frac{l^{4}}{12}\frac{h^{2}}{12}\left[ \frac{\partial ^{8}v}{%
\partial t^{4}\partial x^{4}}+\frac{\partial ^{8}v}{\partial t^{4}\partial
y^{4}}\right] +...
\end{eqnarray}%
We now examine the second equation in $\left( \ref{Problem1}\right) $.
Applying the same calculus as above, we get
\begin{equation*}
v=\left( qv-\kappa \left( \Delta u\right) \right) -\beta l^{2}\frac{\partial
^{2}v}{\partial t^{2}}+q\alpha l^{2}\frac{\partial ^{2}u}{\partial t^{2}}%
-\kappa \alpha l^{2}\frac{\partial \left( \Delta u\right) }{\partial t^{2}}%
-\kappa \frac{h^{2}}{12}\left( \frac{\partial ^{4}u}{\partial x^{4}}+\frac{%
\partial ^{4}u}{\partial y^{4}}\right) +o\left( l^{2}+h^{2}\right) .
\end{equation*}%
It results from above that the principal part of the first equation in
system \ref{Problem1} is
\begin{eqnarray}
\mathcal{L}_{u,v}^{1}\left( t,x,y\right) &=&\beta l^{2}\frac{\partial ^{2}v}{%
\partial t^{2}}-\alpha ql^{2}\frac{\partial ^{2}u}{\partial t^{2}}-\alpha
\kappa l^{2}\frac{\partial ^{2}\left( \Delta v\right) }{\partial t^{2}}
\notag \\
&&-\kappa \frac{l^{2}}{12}\left( \frac{\partial ^{4}v}{\partial x^{4}}+\frac{%
\partial ^{4}v}{\partial y^{4}}\right) +o\left( l^{2}+h^{2}\right) .
\end{eqnarray}%
The principal part of the local error truncation due to the second equation
is
\begin{eqnarray}
\mathcal{L}_{u,v}^{2}\left( t,x,y\right) &=&\beta \frac{l^{2}}{2}\frac{%
\partial ^{2}}{\partial t^{2}}\left( v-qu-\kappa \alpha \Delta u\right)
\notag \\
&&-\kappa \frac{h^{2}}{12}\left( \frac{\partial ^{4}v}{\partial x^{4}}+\frac{%
\partial ^{4}v}{\partial y^{4}}\right) +o\left( l^{2}+h^{2}\right) .
\end{eqnarray}%
As a result, we get the following lemma.

\begin{lemma}
The numerical method is consistent with an order $2$ in space and time.
\end{lemma}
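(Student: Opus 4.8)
The plan is to prove consistency in the standard sense of the local truncation error: insert the exact (and, by the standing hypothesis, sufficiently smooth) solution $(u,v)$ of the continuous system $(\ref{Problem1})$ into each of the two discrete equations of $(\ref{LyapunovSystem})$, and show that the residual — the amount by which the exact net values fail to satisfy the scheme — decays like $l^2+h^2$. Since the Taylor expansions above have already isolated the principal parts $\mathcal{L}_{u,v}^1$ and $\mathcal{L}_{u,v}^2$ of this residual, the remaining task is to confirm that each surviving term carries a factor $l^2$ or $h^2$ and to bound them uniformly on the region of interest.

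First I would record precisely what the residual is. Substituting $u_{j,m}^n=u(x_j,y_m,t_n)$ and $v_{j,m}^n=v(x_j,y_m,t_n)$ into the discrete operators, the $O(1)$ terms of the expansions reproduce exactly the continuous relations $\frac{\partial u}{\partial t}-\Delta v-\lambda|\nabla u|^2=0$ and $v-qu+\kappa\Delta u=0$, which the exact solution satisfies identically. Hence the truncation error of equation $i$ equals its principal part $\mathcal{L}_{u,v}^i$ plus the $o(l^2+h^2)$ remainder, as displayed.

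The key step is then to verify that no $O(l)$ or $O(h)$ term can survive, so that the leading order is genuinely two. This is where the symmetry of the stencils does the work: the centered time difference gives $\frac{U^{n+1}-U^{n-1}}{2l}=\frac{\partial u}{\partial t}+\frac{l^2}{6}\frac{\partial^3 u}{\partial t^3}+\cdots$ with the $O(l)$ term cancelled; the symmetric barycentric average satisfies $U^{n,\alpha}=u+\alpha l^2\frac{\partial^2 u}{\partial t^2}+O(l^4)$, again with no first-order contribution in $l$; and the even second-difference stencils produce $\frac{V_{j-1,m}^{n,\beta}-2V_{j,m}^{n,\beta}+V_{j+1,m}^{n,\beta}}{h^2}=\frac{\partial^2 v}{\partial x^2}+\frac{h^2}{12}\frac{\partial^4 v}{\partial x^4}+\cdots$ (and likewise in $y$), the odd powers of $h$ carried by the one-sided parts $V_{j\pm1,m}^{n,\beta}$ cancelling in the symmetric combination. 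Collecting these, every term of $\mathcal{L}_{u,v}^1$ and $\mathcal{L}_{u,v}^2$ is proportional to $l^2$ or $h^2$, so that for sufficiently smooth $(u,v)$ the derivatives are bounded on the compact region and $\left\Vert\mathcal{L}_{u,v}^i\right\Vert\le C\left(l^2+h^2\right)$ with $C$ depending only on those bounds and on $q,\kappa,\lambda,\alpha,\beta$. Letting $h,l\to0$ (the solvability scaling $l=o(h^{4+s})$ only accelerates the decay) then gives consistency of order $2$ in both space and time.

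The step I expect to be the main obstacle is the treatment of the nonlinear term $F_{j,m}^n$, which must be shown to reproduce $\lambda|\nabla u|^2$ to the same second order. Here each centered first difference gives $\frac{U_{j+1,m}^n-U_{j-1,m}^n}{2h}=\frac{\partial u}{\partial x}+O(h^2)$, and squaring and summing preserves the $O(h^2)$ order precisely because $u$ is smooth and its gradient is bounded, so that $F_{j,m}^n=|\nabla u|^2+O(h^2)$; tracking this contribution together with the mixed space-time remainders coming from the $\beta$-interpolated $V$ inside the spatial stencil — and checking that none of these cross terms drops below second order — is the delicate bookkeeping that underlies the stated order.
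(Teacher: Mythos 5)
Your proposal is correct and follows essentially the same route as the paper: it is the standard local truncation error argument, inserting the exact solution into the scheme, invoking the Taylor expansions with the symmetric (centered and $\alpha$-, $\beta$-averaged) stencils to cancel all first-order terms, and concluding that the principal parts $\mathcal{L}_{u,v}^{1}$ and $\mathcal{L}_{u,v}^{2}$ are $O\left(l^{2}+h^{2}\right)$. The only difference is that you spell out the second-order accuracy of the nonlinear term $F_{j,m}^{n}$ explicitly, a point the paper leaves implicit in its expansions.
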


\begin{proof}
It is clear that the two operators $\mathcal{L}_{u,v}^{1}$ and $\mathcal{L}%
_{u,v}^{2}$ tend towards $0$ as $l$ and $h$ tend to $0$, which ensures the
consistency of the method. Furthermore, the method is consistent with an
order $2$ in time and space.
\end{proof}

\section{Stability and convergence}

The stability of the discrete scheme will be evaluated using Lyapunov
criterion which states that a linear system $\mathcal{L}\left(
x_{n+1},x_{n},x_{n-1},...\right) =0$ is stable in the sense of Lyapunov if
for any bounded initial solution $x_{0},$ the solution $x_{n}$ remains
bounded for all $n\geq 0.$ In this section we prove precisely the following
result.

\begin{lemma}
The solution $\left( U^{n},V^{n}\right) $ is bounded independently of $n$
whenever the initial solution $\left( U^{0},V^{0}\right) $ is.
\end{lemma}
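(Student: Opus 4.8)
The plan is to recast the coupled two-step scheme (\ref{LyapunovSystem}) as a single one-step recursion on a stacked state and then to reduce its boundedness to a family of scalar amplification problems indexed by the eigenmodes of $A$. First I would set $W^{n}=(U^{n},V^{n},U^{n-1},V^{n-1})$ in $M_{(J+1)^{2}}(\mathbb{R})^{4}$. Since the operator $\phi(X,Y)=(X-\sigma\beta\mathcal{L}_{A}(Y),\beta Y-\alpha\Gamma(X))$ is invertible for $l=o(h^{4+s})$ by the solvability theorem, the implicit left-hand side of (\ref{LyapunovSystem}) can be inverted, giving
\begin{equation*}
W^{n+1}=\mathcal{M}\,W^{n}+b^{n},
\end{equation*}
where $\mathcal{M}$ is the fixed ($h,l$-dependent) linear amplification operator and $b^{n}$ collects the nonlinear forcing $\sigma\lambda F^{n}$ pushed through $\phi^{-1}$. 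Lyapunov stability is then exactly the power-boundedness $\sup_{n}\|\mathcal{M}^{n}\|<\infty$ together with control of the accumulated forcing $\sum_{k}\mathcal{M}^{n-1-k}b^{k}$.

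Next I would diagonalise. Because $\Gamma=qI-\delta\kappa\mathcal{L}_{A}$ is a polynomial in $\mathcal{L}_{A}$, the two operators share eigenmatrices, and since the Neumann matrix $A$ is a Jacobi matrix with positive symmetric off-diagonal products it is similar to a symmetric matrix, hence diagonalisable with real eigenvalues $\mu_{i}\le 0$. Writing $Ap_{i}=\mu_{i}p_{i}$ and $A^{T}w_{j}=\mu_{j}w_{j}$, the rank-one matrices $p_{i}w_{j}^{T}$ diagonalise both $\mathcal{L}_{A}$ (eigenvalue $\mu=\mu_{i}+\mu_{j}\le 0$) and $\Gamma$ (eigenvalue $\gamma=q-\delta\kappa\mu$). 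The linear scheme then decouples into independent scalar recursions $P_{+}w^{n+1}=P_{0}w^{n}+P_{-}w^{n-1}$ for the mode amplitude $w^{n}=(u^{n},v^{n})$, whose $4\times4$ companion $M_{ij}$ carries all the stability information. A direct expansion produces the pleasant factorisation of the characteristic polynomial
\begin{equation*}
\det\!\big(z^{2}P_{+}-zP_{0}-P_{-}\big)=p(z)\,\big[(z^{2}-1)-\sigma\mu\gamma\,r(z)\big],
\end{equation*}
with $p(z)=\beta z^{2}+(1-2\beta)z+\beta$ and $r(z)=\alpha z^{2}+(1-2\alpha)z+\alpha$.

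The hard part will be the root-location (von Neumann/Jury) analysis of this quartic. The factor $p$ is mode-independent: its roots have product $1$, so they lie on the unit circle and stay simple precisely when $\beta>\tfrac14$, while at $\beta=\tfrac14$ a defective double root at $-1$ appears; this forces the interpolation parameter into the range $\beta>1/4$. For the second factor I would use that $\mu\le0$ and, for the physically relevant signs $\kappa>0$, $q\ge0$, one has $\gamma=q-\delta\kappa\mu\ge 0$, whence $\theta:=\sigma\mu\gamma\le0$. Expanding about the unperturbed roots $z=\pm1$ (legitimate because the CFL condition $l=o(h^{4+s})$ keeps $\sigma\mu\gamma$ small) shows the root near $+1$ moves inside the disk when $\theta\le0$, and the root near $-1$ stays in the closed disk provided $\alpha\ge\tfrac14$; the Schur--Cohn criterion upgrades this to the full range of $\theta$. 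Assembling these gives $\rho(M_{ij})\le1$ with the unit-circle eigenvalues semisimple, hence $\sup_{n}\|\mathcal{M}^{n}\|<\infty$.

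It then remains to absorb the nonlinear forcing, which I expect to be the second real difficulty. Passing $\sigma\lambda F^{n}$ through $\phi^{-1}$ yields a contribution of size $O(\sigma)=o(h^{2+s})$ in the $U$-component and $O(\sigma\gamma)=O(l/h^{4})=o(h^{s})$ in the $V$-component, both quadratic in $U^{n}$ through $F^{n}$. Bounding $\|F^{k}\|\le C_{h}\|W^{k}\|^{2}$ and inserting the power bound $\|\mathcal{M}^{n}\|\le C$ into the variation-of-constants formula gives $\|W^{n}\|\le C\|W^{0}\|+C\sum_{k<n}o(h^{s})\,\|W^{k}\|^{2}$, so a discrete Gronwall/bootstrap argument delivers a bound independent of $n$ on any fixed horizon $nl\le T$, and globally once the data are small enough that the quadratic term never overtakes the linear decay. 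Reconciling the genuinely quadratic nonlinearity with an $n$-uniform bound is the point where the smallness $o(h^{s})$ of the forcing supplied by the CFL condition is essential.
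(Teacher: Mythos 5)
Your modal reduction is a genuinely different route from the paper's, and its linear-algebra core is essentially right: the characteristic determinant does factor as $p(z)\bigl[(z^{2}-1)-\sigma\mu\gamma\,r(z)\bigr]$ (with the small caveat that the eigenmatrices of $\mathcal{L}_{A}(X)=AX+XA^{T}$ are $p_{i}p_{j}^{T}$ with \emph{both} factors eigenvectors of $A$, not $p_{i}w_{j}^{T}$ with $w_{j}$ an eigenvector of $A^{T}$). The paper argues quite differently: it proceeds by recurrence on $n$ using the $\varepsilon$--$\eta$ Lyapunov criterion, bounding $\Vert\mathcal{K}(U^{n+1})\Vert$ and $\Vert\beta\mathcal{K}(V^{n+1})\Vert$ in terms of the previous levels via $\Vert F^{n}\Vert\le 2\Vert U^{n}\Vert^{2}$ and crude operator-norm estimates for $\mathcal{L}_{A}$ and $\Gamma$, settles $n=1$ through a quadratic inequality in $\eta$ (discriminant argument), and propagates using the continuity of $\phi^{-1}$ from the open mapping theorem; no spectral decomposition of $A$ and no sign or size restrictions on $\alpha$, $\beta$, $q$, $\kappa$ appear there.

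As a proof of the stated lemma, however, your plan has two genuine gaps. First, the root-location step forces hypotheses the lemma does not contain: $p(z)=\beta z^{2}+(1-2\beta)z+\beta$ has real roots of product $1$, hence one of modulus greater than $1$, whenever $0<\beta<1/4$; the second factor needs $\alpha\ge 1/4$ (your own Jury condition, $|\theta|(4\alpha-1)\ge 0$ at $z=-1$) and the sign assumptions $q\ge 0$, $\kappa>0$ to guarantee $\gamma\ge 0$. The paper's statement carries none of these restrictions, and its own experiments take $\beta=1/5<1/4$, so your argument either must add hypotheses (proving a different statement) or stalls precisely where the lemma is supposed to apply. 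Second, and more fundamentally, the concluding Gronwall/bootstrap does not yield a bound independent of $n$: your analysis gives at best $\rho(\mathcal{M})\le 1$ with eigenvalues \emph{on} the unit circle (all roots of $p$, and the root near $-1$), i.e.\ power-boundedness with no decay, so the ``linear decay'' you invoke to dominate the quadratic term is simply not available. The variation-of-constants sum then has $n$ terms each of size $O(\sigma\Vert U^{k}\Vert^{2})$, which over $n\sim T/l$ steps accumulates to $O(T\varepsilon^{2}/h^{2})$ --- finite for a fixed grid on a fixed horizon, but uniform neither in $n$ nor in $h$. Closing the argument would require a strictly dissipative estimate (spectral radius strictly below one, or an energy inequality) or a decay/smallness mechanism for $F^{n}$ that the proposal does not supply; whether the unconditional lemma even survives your $\beta<1/4$ observation is a separate question, but as written the proof is incomplete at these two points.
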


\begin{proof}
We proceed by recurrence on $n.$ Assume $\left\Vert \left(
U_{0},V_{0}\right) \right\Vert \leq \eta $ for some positive $\eta $. The
system $\left( \ref{LyapunovSystem}\right) $ can be written on the form%
\begin{equation}
\left\{
\begin{array}{c}
U^{n+1}-\sigma \beta \mathcal{L}_{A}\left( V^{n+1}\right) =U^{n-1}+\sigma
\left( 1-2\beta \right) \mathcal{L}_{A}\left( V^{n}\right) +\sigma \beta
\mathcal{L}_{A}\left( V^{n-1}\right) +\sigma \lambda F^{n}. \\
\beta V^{n+1}=\alpha \Gamma \left( U^{n+1}\right) +\left( 1-2\alpha \right)
\Gamma \left( U^{n}\right) +\alpha \Gamma \left( U^{n-1}\right) -\left(
1-2\beta \right) V^{n}-\beta V^{n-1}.%
\end{array}%
\right.  \label{StabilitySystem}
\end{equation}%
The last equation gives,%
\begin{eqnarray*}
\beta \mathcal{L}_{A}\left( V^{n+1}\right) &=&\alpha \mathcal{L}_{A}\Gamma
\left( U^{n+1}\right) +\left( 1-2\alpha \right) \mathcal{L}_{A}\Gamma \left(
U^{n}\right) \\
&&+\alpha \mathcal{L}_{A}\Gamma \left( U^{n-1}\right) -\left( 1-2\beta
\right) \mathcal{L}_{A}\left( V^{n}\right) -\beta \mathcal{L}_{A}\left(
V^{n-1}\right) .
\end{eqnarray*}%
Substituting in the first one, we obtain%
\begin{equation}
\mathcal{K}\left( U^{n+1}\right) =\sigma \left( 1-2\alpha \right) \mathcal{L}%
_{A}\Gamma \left( U^{n}\right) +\sigma \alpha \mathcal{L}_{A}\Gamma \left(
U^{n-1}\right) +U^{n-1}+\sigma \lambda F^{n}.  \label{TUn+1}
\end{equation}%
\newline
Next, recall that%
\begin{equation*}
\left\vert F_{j,m}^{n}\right\vert =\frac{1}{4}\left\vert \left(
U_{j+1,m}^{n}-U_{j-1,m}^{n}\right) ^{2}+\left(
U_{j,m+1}^{n}-U_{j,m-1}^{n}\right) ^{2}\right\vert .
\end{equation*}%
Thus,%
\begin{equation*}
\left\vert F_{j,m}^{n}\right\vert \leq \frac{1}{2}\left[ \left(
U_{j+1,m}^{n}\right) ^{2}+\left( U_{j-1,m}^{n}\right) ^{2}+\left(
U_{j,m+1}^{n}\right) ^{2}+\left( U_{j,m-1}^{n}\right) ^{2}\right]
\end{equation*}%
and consequently,
\begin{equation}
\left\Vert F^{n}\right\Vert _{2}\leq 2\left\Vert U^{n}\right\Vert _{2}^{2}.
\label{NormeFn}
\end{equation}%
Finally, $\left( \ref{TUn+1}\right) $ yields that%
\begin{eqnarray*}
\left\Vert \mathcal{K}\left( U^{n+1}\right) \right\Vert &\leq &\sigma
\left\vert 1-2\alpha \right\vert \left\Vert \mathcal{L}_{A}\Gamma \left(
U^{n}\right) \right\Vert +\sigma \left\vert \alpha \right\vert \left\Vert
\mathcal{L}_{A}\Gamma \left( U^{n-1}\right) \right\Vert +\left\Vert
U^{n-1}\right\Vert \\
&&+\sigma \left\vert \lambda \right\vert \left\Vert F^{n}\right\Vert .
\end{eqnarray*}%
Setting $\omega =\left\vert q\right\vert +8\delta \left\vert \kappa
\right\vert $, we obtain
\begin{equation}
\left\Vert \mathcal{K}\left( U^{n+1}\right) \right\Vert \leq 8\omega \sigma
\left\vert 1-2\alpha \right\vert \left\Vert U^{n}\right\Vert +\left[
1+8\omega \sigma \left\vert \alpha \right\vert \right] \left\Vert
U^{n-1}\right\Vert +2\sigma \left\vert \lambda \right\vert \left\Vert
U^{n}\right\Vert ^{2}.  \label{NormeTUn+1}
\end{equation}%
\newline
We now evaluate $\left\Vert V^{n+1}\right\Vert .$ Applying $\Gamma $ for the
first equation in the system $\left( \ref{StabilitySystem}\right) $, we get%
\begin{eqnarray*}
\Gamma \left( U^{n+1}\right) &=&\sigma \beta \Gamma \mathcal{L}_{A}\left(
V^{n+1}\right) +\Gamma \left( U^{n-1}\right) +\sigma \left( 1-2\beta \right)
\Gamma \left( \mathcal{L}_{A}\left( V^{n}\right) \right) \\
&&+\sigma \beta \Gamma \left( \mathcal{L}_{A}\left( V^{n-1}\right) \right)
+\sigma \lambda \Gamma \left( F^{n}\right) .
\end{eqnarray*}%
By replacing in the second equation of $\left( \ref{SolvabiltySystem}\right)
$\ we obtain%
\begin{eqnarray}
\beta \mathcal{K}\left( V^{n+1}\right) &=&\left( 1-2\beta \right) \left[
\sigma \alpha \Gamma \left( \mathcal{L}_{A}\left( V^{n}\right) \right) -V^{n}%
\right]  \notag \\
&&+\beta \left[ \sigma \alpha \Gamma \left( \mathcal{L}_{A}\left(
V^{n-1}\right) \right) -V^{n-1}\right]  \notag \\
&&+2\alpha \Gamma \left( U^{n-1}\right) +\left( 1-2\alpha \right) \Gamma
\left( U^{n}\right) +\sigma \alpha \lambda \Gamma \left( F^{n}\right) ,
\label{ThetaVn+1}
\end{eqnarray}%
We get from $\left( \ref{ThetaVn+1}\right) $ and $\left( \ref{NormeFn}%
\right) $,%
\begin{eqnarray}
\left\Vert \beta \mathcal{K}\left( V^{n+1}\right) \right\Vert &\leq
&\left\vert 1-2\beta \right\vert \left[ 8\sigma \left\vert \alpha
\right\vert \omega +1\right] \left\Vert V^{n}\right\Vert
\label{NormThetaVn+1} \\
&&+\left\vert \beta \right\vert \left[ \left[ 8\sigma \left\vert \alpha
\right\vert \omega +1\right] \left\Vert V^{n-1}\right\Vert \right]
+2\left\vert \alpha \right\vert \omega \left\Vert U^{n-1}\right\Vert  \notag
\\
&&+\left( 1-2\alpha \right) \omega \left\Vert U^{n}\right\Vert +2\sigma
\alpha \lambda \omega \left\Vert U^{n}\right\Vert ^{2}.  \notag
\end{eqnarray}%
\newline
Now comming back to $\left( \ref{Problem1}\right) $ and appalying boundry
conditions, we get%
\begin{equation}
U^{-1}=U^{0}+l\widetilde{\varphi }\hbox{ \ \ and \ \ }V^{-1}=qU^{0}+%
\widetilde{\psi }
\end{equation}%
where,%
\begin{equation*}
\widetilde{\varphi }=-q\Delta \varphi +\kappa \Delta ^{2}\varphi -\lambda
\left\vert \nabla \varphi \right\vert ^{2}
\end{equation*}%
and
\begin{equation*}
\widetilde{\psi }=-\left( lq^{2}+\kappa \right) \Delta \varphi +2l\kappa
q\Delta ^{2}\varphi -l\kappa ^{2}\Delta ^{3}\varphi -\lambda lq\left\vert
\nabla \varphi \right\vert ^{2}+\lambda l\kappa \Delta \left( \left\vert
\nabla \varphi \right\vert ^{2}\right) .
\end{equation*}%
Hence,
\begin{equation}
\left\Vert U^{-1}\right\Vert \leq \left\Vert U^{0}\right\Vert +l\left\Vert
\widetilde{\varphi }\right\Vert \hbox{ \ \ and \ \ }\left\Vert
V^{-1}\right\Vert \leq \left\vert q\right\vert \left\Vert U^{0}\right\Vert
+\left\Vert \widetilde{\psi }\right\Vert .  \label{NromeU-1andV-1}
\end{equation}%
Now, the Lyapunov criterion for stability states exactly that
\begin{equation*}
\forall \varepsilon >0,\exists \eta >0:\ \left\Vert \left(
U^{0},V^{0}\right) \right\Vert \leq \eta \Rightarrow \left\Vert \left(
U^{n},V^{n}\right) \right\Vert \leq \varepsilon ,\ \ \ \ \forall n\geq 0.
\end{equation*}%
\newline
For $n=1,$ and any $\varepsilon $ given such that $\left\Vert \left(
U^{1},V^{1}\right) \right\Vert \leq \varepsilon ,$ we seek an $\eta >0$ for
wich $\left\Vert \left( U^{0},V^{0}\right) \right\Vert <\eta $.\newline
By direct substitution in $\left( \ref{NormeTUn+1}\right) $, for $n=0,$ we
obtain%
\begin{equation*}
\left\Vert \mathcal{K}\left( U^{1}\right) \right\Vert \leq 8\omega \sigma
\left\vert 1-2\alpha \right\vert \left\Vert U^{0}\right\Vert +\left[
1+8\omega \sigma \left\vert \alpha \right\vert \right] \left\Vert
U^{-1}\right\Vert +2\sigma \left\vert \lambda \right\vert \left\Vert
U^{0}\right\Vert ^{2}.
\end{equation*}%
From $\left( \ref{NromeU-1andV-1}\right) ,$ we obtain%
\begin{eqnarray*}
\left\Vert \mathcal{K}\left( U^{1}\right) \right\Vert &\leq &2\sigma
\left\vert \lambda \right\vert \left\Vert U^{0}\right\Vert ^{2}+\left(
1+8\omega \sigma \left[ \left\vert 1-2\alpha \right\vert +\left\vert \alpha
\right\vert \right] \right) \left\Vert U^{0}\right\Vert \\
&&+l\left( 1+8\sigma \left\vert \alpha \right\vert \omega \right) \left\Vert
\widetilde{\varphi }\right\Vert .
\end{eqnarray*}%
Observing that,%
\begin{equation*}
\left\vert 1-2\alpha \right\vert +\left\vert \alpha \right\vert \leq \left(
1+3\left\vert \alpha \right\vert \right) ,
\end{equation*}%
we get%
\begin{equation*}
\left\Vert \mathcal{K}\left( U^{1}\right) \right\Vert \leq 2\sigma
\left\vert \lambda \right\vert \left\Vert U^{0}\right\Vert ^{2}+8\omega
\sigma \left( 1+3\left\vert \alpha \right\vert \right) \left\Vert
U^{0}\right\Vert +l\left( 1+8\sigma \left\vert \alpha \right\vert \omega
\right) \left\Vert \widetilde{\varphi }\right\Vert .
\end{equation*}%
Next choosing $l=o\left( h^{4+s}\right) $ small enough, we obtain%
\begin{equation*}
\left\Vert U^{1}\right\Vert \leq 4\sigma \left\vert \lambda \right\vert
\left\Vert U^{0}\right\Vert ^{2}+16\omega \sigma \left( 1+3\left\vert \alpha
\right\vert \right) \left\Vert U^{0}\right\Vert +2l\left( 1+8\sigma
\left\vert \alpha \right\vert \omega \right) \left\Vert \widetilde{\varphi }%
\right\Vert .
\end{equation*}%
Now, for $\varepsilon >0$, we seek $\eta >0$ such that
\begin{equation}
4\sigma \left\vert \lambda \right\vert \eta ^{2}+16\omega \sigma \left(
1+3\left\vert \alpha \right\vert \right) \eta +2l\left( 1+8\sigma \left\vert
\alpha \right\vert \omega \right) \left\Vert \widetilde{\varphi }\right\Vert
<\varepsilon  \label{equality1}
\end{equation}%
or otherwise,%
\begin{equation*}
4\sigma \left\vert \lambda \right\vert \eta ^{2}+16\omega \sigma \left(
1+3\left\vert \alpha \right\vert \right) \eta +2l\left( 1+8\sigma \left\vert
\alpha \right\vert \omega \right) \left\Vert \widetilde{\varphi }\right\Vert
-\varepsilon <0.
\end{equation*}%
The discriminant of the last inequatlity is
\begin{equation*}
\Delta ^{\prime }=64\left( \omega \sigma \left( 1+3\left\vert \alpha
\right\vert \right) \right) ^{2}-4\sigma \left\vert \lambda \right\vert
(2l\left( 1+8\sigma \left\vert \alpha \right\vert \omega \right) \left\Vert
\widetilde{\varphi }\right\Vert -\varepsilon ).
\end{equation*}%
For the same assumption on $l$ and $h$ as above,
\begin{equation*}
\Delta ^{\prime }\sim 64\left( \omega \sigma \left( 1+3\left\vert \alpha
\right\vert \right) \right) ^{2}+4\sigma \left\vert \lambda \right\vert
\varepsilon >0.
\end{equation*}%
Consenquentely, there are two zeros $\eta _{1}<\eta _{2}$ of the inequality
above. Furthermore, replacing $\eta $ with $0$ we get a negative quantity,
thus $\eta _{1}<0<\eta _{2}.$ As a result, $\eta _{2}$ is the good
candidate. Now, choosing $\left\Vert (U^{0},V^{0})\right\Vert \leq \eta _{2}$%
, we get immediately $\left\Vert U^{1}\right\Vert <\varepsilon .$\newline
Next, already with $n=0,$ we get similarly to the previous case%
\begin{eqnarray*}
\left\Vert \beta \mathcal{K}\left( V^{1}\right) \right\Vert &\leq
&\left\vert 1-2\beta \right\vert \left[ 8\sigma \left\vert \alpha
\right\vert \omega +1\right] \left\Vert V^{0}\right\Vert +\left\vert \beta
\right\vert \left[ \left[ 8\sigma \left\vert \alpha \right\vert \omega +1%
\right] \left\Vert V^{-1}\right\Vert \right] \\
&&+2\left\vert \alpha \right\vert \omega \left\Vert U^{-1}\right\Vert
+\left\vert 1-2\alpha \right\vert \omega \left\Vert U^{0}\right\Vert
+2\sigma \alpha \lambda \omega \left\Vert U^{0}\right\Vert ^{2}
\end{eqnarray*}%
\newline
Choosing $l=o\left( h^{4+s}\right) $ small enough as above, and $\mu
=8\sigma \left\vert \alpha \right\vert \omega +1$, we obtain%
\begin{eqnarray*}
\frac{\left\vert \beta \right\vert }{2}\left\Vert V^{1}\right\Vert &\leq
&\left\Vert \beta \mathcal{K}\left( V^{1}\right) \right\Vert \leq \mu
\left\vert 1-2\beta \right\vert \left\Vert V^{0}\right\Vert +\mu \left\vert
\beta \right\vert \left[ \left\Vert V^{-1}\right\Vert \right] \\
&&+2\left\vert \alpha \right\vert \omega \left\Vert U^{-1}\right\Vert
+\left\vert 1-2\alpha \right\vert \omega \left\Vert U^{0}\right\Vert
+2\sigma \alpha \lambda \omega \left\Vert U^{0}\right\Vert ^{2}
\end{eqnarray*}%
Next, recall that
\begin{equation*}
\left\Vert U^{-1}\right\Vert \leq \left\Vert U^{0}\right\Vert +\left\Vert
\widetilde{\varphi }\right\Vert \hbox{ and }\left\Vert V^{-1}\right\Vert
\leq \left\vert q\right\vert \left\Vert U^{0}\right\Vert +l\left\Vert
\widetilde{\psi }\right\Vert ,
\end{equation*}%
we get%
\begin{eqnarray*}
\left\vert \beta \right\vert \left\Vert V^{1}\right\Vert &\leq &2\mu
\left\vert 1-2\beta \right\vert \left\Vert V^{0}\right\Vert +2\mu \left\vert
\beta \right\vert \left( \left\vert q\right\vert \left\Vert U^{0}\right\Vert
+\left\Vert \widetilde{\psi }\right\Vert \right) \\
&&+4\left\vert \alpha \right\vert \omega \left( \left\Vert U^{0}\right\Vert
+l\left\Vert \widetilde{\varphi }\right\Vert \right) +2\left\vert 1-2\alpha
\right\vert \omega \left\Vert U^{0}\right\Vert +4\sigma \alpha \lambda
\omega \left\Vert U^{0}\right\Vert ^{2}.
\end{eqnarray*}%
Henceforth,%
\begin{eqnarray*}
\left\vert \beta \right\vert \left\Vert V^{1}\right\Vert &\leq &4\sigma
\alpha \lambda \omega \left\Vert U^{0}\right\Vert ^{2}+2\mu \left\vert
1-2\beta \right\vert \left\Vert V^{0}\right\Vert \\
&&+2\left( \mu \left\vert q\right\vert \left\vert \beta \right\vert +\omega
\left( 2\left\vert \alpha \right\vert +\left\vert 1-2\alpha \right\vert
\right) \right) \left\Vert U^{0}\right\Vert \\
&&+2\mu \left\vert \beta \right\vert \left\Vert \widetilde{\psi }\right\Vert
+4\left\vert \alpha \right\vert \omega l\left\Vert \widetilde{\varphi }%
\right\Vert .
\end{eqnarray*}%
Now, proceeding as for $U^{1}$, we prove that for all $\varepsilon >0,$
there is an $\eta _{2}^{\prime }>0$ satisfying $\left\Vert V^{1}\right\Vert
<\varepsilon $ whenever $\left\Vert (U^{0},V^{0})\right\Vert \leq \eta
_{2}^{\prime }.$ Finally $\eta =\min \left( \eta _{2},\eta _{2}^{\prime
}\right) $ answers the question.\newline
Assume now that $\left( U^{k},V^{k}\right) $ is bounded for $k=1,2,...,n$ by
$\varepsilon _{1}$ whenever $\left( U^{0},V^{0}\right) $ is bounded by $\eta
$ and let $\varepsilon >0.$ We shall prove that it is possible to choose $%
\eta $ satisfying $\left\Vert \left( U^{n+1},V^{n+1}\right) \right\Vert \leq
\varepsilon .$
\end{proof}

\begin{lemma}
(Banach) Let $E,F$ be two Banach spaces and $\phi :E\rightarrow F$ be
linear. If $\phi $ is continue and bijective then $\phi $ is a homeomorphism.%
\newline
Consider as above the endomorphism $\phi $ on $M_{\left( J+1\right)
^{2}}\left( \mathbb{R}\right) \times M_{\left( J+1\right) ^{2}}\left(
\mathbb{R}\right) $ defined by%
\begin{equation*}
\phi \left( X,Y\right) =\left( X-\sigma \beta \mathcal{L}_{A}\left( Y\right)
,\beta Y-\alpha \left[ qX-\delta \kappa \mathcal{L}_{A}\left( X\right) %
\right] \right) .
\end{equation*}%
Consider also%
\begin{equation*}
f_{1}\left( X,Y,Z,W\right) =X+\alpha \beta \mathcal{L}_{A}\left( Y\right)
+\sigma \left( 1-2\beta \right) +\sigma \beta \mathcal{L}_{A}\left( Z\right)
+\sigma \lambda W
\end{equation*}%
and%
\begin{equation*}
f_{2}\left( X,Y,Z,T\right) =\alpha \Gamma \left( X\right) -\beta Y+\left(
1-2\alpha \right) \Gamma \left( Z\right) -\left( 1-2\beta \right) T.
\end{equation*}%
We obtain thus
\begin{eqnarray*}
\phi \left( U^{n+1},V^{n+1}\right) &=&\left( U^{n+1}-\sigma \beta \mathcal{L}%
_{A}\left( V^{n+1}\right) ,\beta V^{n+1}-\alpha \Gamma \left( U^{n+1}\right)
\right) \\
&=&\left( f_{1}\left( U^{n-1},V^{n-1},V^{n},F^{n}\right) ,f_{2}\left(
U^{n-1},V^{n-1},U^{n},V^{n}\right) \right) .
\end{eqnarray*}%
We have already proved that $\phi $ is one to one for $l$ and $h$ small
enough. Since $\phi $ is linear and continuous, then $\phi $ has a
continuous inverse function. So, $\phi $ is a homeomorphism on $M_{\left(
J+1\right) ^{2}}\left( \mathbb{R}\right) \times M_{\left( J+1\right)
^{2}}\left( \mathbb{R}\right) $ . Furthermore%
\begin{equation*}
\left( U^{n+1},V^{n+1}\right) =\phi ^{-1}\left( f_{1}\left(
U^{n-1},V^{n-1},V^{n},F^{n}\right) ,f_{2}\left(
U^{n-1},V^{n-1},U^{n},V^{n}\right) \right) .
\end{equation*}%
As $\phi ^{-1}$ is continuous, there exists $C>0$ such that
\end{lemma}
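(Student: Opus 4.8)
The plan is to read the displayed statement as two linked claims: the abstract Banach bounded-inverse lemma, and its quantitative use to close the stability induction begun in the proof of the boundedness lemma. For the abstract part I would argue in one line: since $\phi$ is a continuous linear bijection between Banach spaces, the open mapping theorem makes $\phi$ an open map, so $\phi^{-1}$ is continuous, and a continuous linear bijection with continuous inverse is by definition a homeomorphism. In the present situation $E=F=M_{(J+1)^2}(\mathbb{R})\times M_{(J+1)^2}(\mathbb{R})$ is finite dimensional, so this is in fact automatic by equivalence of norms. The hypotheses hold because the solvability analysis already proved $\phi$ one-to-one for $l,h$ small with $l=o(h^{4+s})$, and an injective endomorphism of a finite dimensional space is bijective.

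The quantitative step is where the work lies. Writing $C:=\|\phi^{-1}\|<\infty$ and using the identity
\[
(U^{n+1},V^{n+1})=\phi^{-1}\big(f_1(U^{n-1},V^{n-1},V^n,F^n),\,f_2(U^{n-1},V^{n-1},U^n,V^n)\big),
\]
continuity of $\phi^{-1}$ gives $\|(U^{n+1},V^{n+1})\|\le C\,\|(f_1,f_2)\|$. The maps $f_1,f_2$ are affine in their matrix arguments, the sole nonlinearity entering through $F^n$ inside $f_1$. Using $\|\mathcal{L}_A\|\le 8$, $\|\Gamma\|\le\omega=|q|+8\delta|\kappa|$, and the bound $\|F^n\|_2\le 2\|U^n\|_2^2$ from (\ref{NormeFn}), I would separate a linear and a quadratic contribution and obtain an estimate of the shape
\[
\|(U^{n+1},V^{n+1})\|\le C\big(a\,m_n+b\,m_n^2\big),\qquad m_n:=\max_{k\le n}\|(U^k,V^k)\|,
\]
with $a,b$ explicit in $\sigma,\alpha,\beta,\omega,\lambda$, mirroring the coefficients already produced for $n=1$ in (\ref{NormeTUn+1}) and (\ref{NormThetaVn+1}).

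To close the recurrence I would repeat, at rank $n$, the quadratic-inequality argument used for the base case. Under the induction hypothesis $m_n\le\varepsilon_1$, the one-step gain is bounded by $C(a\varepsilon_1+b\varepsilon_1^2)$; choosing $l=o(h^{4+s})$ shrinks $\sigma=2l/h^2$ and hence $a,b$, so that for the prescribed tolerance $\varepsilon$ the inequality $4\sigma|\lambda|t^2+(\text{linear in }t)-\varepsilon<0$ has, as in (\ref{equality1}), a positive root $\eta$ (positive discriminant, negative value at $t=0$). Taking $\varepsilon_1=\varepsilon$ and $\|(U^0,V^0)\|\le\eta$ then forces $\|(U^{n+1},V^{n+1})\|\le\varepsilon$, completing the induction and hence Lyapunov stability; combined with consistency, convergence follows from the Lax--Richtmyer equivalence theorem.

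The main obstacle is the quadratic term $F^n$. Since $\|\phi^{-1}\|$ need not be less than $1$, the iteration map is not a contraction, so boundedness cannot be read off from a geometric estimate; the delicate point is to exhibit an invariant interval $[0,\varepsilon]$ for the nonlinear gain $t\mapsto C(at+bt^2)$. This is precisely what the smallness of $\sigma$ secures, and the step requiring care is to check that the discriminant stays positive while the admissible root $\eta$ stays bounded away from $0$ uniformly in $n$, so that a single $\eta$ works at every step of the recurrence.
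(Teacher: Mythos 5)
Your proposal follows essentially the same route as the paper: invoke the bounded-inverse fact for the continuous linear bijection $\phi$ (automatic in finite dimension, injectivity having been established in the solvability section), write $\left( U^{n+1},V^{n+1}\right) =\phi ^{-1}\left( f_{1},f_{2}\right) $, bound $f_{1}$ and $f_{2}$ under the hypothesis $\left\Vert \left( U^{k},V^{k}\right) \right\Vert \leq \eta $ using $\left\Vert \mathcal{L}_{A}\right\Vert \leq 8$, $\left\Vert \Gamma \right\Vert \leq \omega $ and $\left\Vert F^{n}\right\Vert \leq 2\left\Vert U^{n}\right\Vert ^{2}$, and close with the same quadratic-in-$\eta $ discriminant argument, taking the minimum of the two admissible thresholds. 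The only differences are cosmetic and in your favor: you keep the constant $C=\left\Vert \phi ^{-1}\right\Vert $ in the final estimate (the paper silently drops it) and you explicitly flag that a single $\eta $ must work uniformly at every step of the recurrence.
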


\begin{eqnarray*}
\left\Vert \left( U^{n+1},V^{n+1}\right) \right\Vert &=&\left\Vert \phi
^{-1}\left( f_{1}\left( U^{n-1},V^{n-1},V^{n},F^{n}\right) ,f_{2}\left(
U^{n-1},V^{n-1},U^{n},V^{n}\right) \right) \right\Vert \\
&\leq &C\left\Vert \left( f_{1}\left( U^{n-1},V^{n-1},V^{n},F^{n}\right)
,f_{2}\left( U^{n-1},V^{n-1},U^{n},V^{n}\right) \right) \right\Vert .
\end{eqnarray*}%
\newline
By choosing $\left\Vert \left( U^{k},V^{k}\right) \right\Vert \leq \eta $,
for all $k=0,1,...n$, we get%
\begin{equation*}
\left\Vert f_{1}\left( U^{n-1},V^{n-1},V^{n},F^{n}\right) \right\Vert \leq
2\sigma \left\vert \lambda \right\vert \eta ^{2}+\left( 1+8\sigma \left(
1+3\left\vert \beta \right\vert \right) \right) \eta .
\end{equation*}%
Now, it suffices to prove that there exists $\eta >0$ for which%
\begin{equation*}
\left( 1+8\sigma \left( 1+3\left\vert \beta \right\vert \right) \right) \eta
+2\sigma \left\vert \lambda \right\vert \eta ^{2}\leq \varepsilon \ \ \
\Leftrightarrow \ \ \ 2\sigma \left\vert \lambda \right\vert \eta
^{2}+\left( 1+8\sigma \left( 1+3\left\vert \beta \right\vert \right) \right)
\eta -\varepsilon \leq 0.
\end{equation*}%
The discernments is%
\begin{equation*}
\Delta =\left( 1+8\sigma \left( 1+3\left\vert \beta \right\vert \right)
\right) ^{2}+8\sigma \left\vert \lambda \right\vert \varepsilon >0.
\end{equation*}%
Hence, there are two zeros of the last equality
\begin{equation*}
\eta _{1}=\frac{-\left( 1+8\sigma \left( 1+3\left\vert \beta \right\vert
\right) \right) -\sqrt{\Delta }}{4\sigma \left\vert \lambda \right\vert }%
\hbox{ and }\eta _{2}=\frac{-\left( 1+8\sigma \left( 1+3\left\vert \beta
\right\vert \right) \right) +\sqrt{\Delta }}{4\sigma \left\vert \lambda
\right\vert }.
\end{equation*}%
It is straightforward that $0\in ]\eta _{1},\eta _{2}[.$ Hence $\eta _{2}>0$%
. Now for $f_{2}$ we get%
\begin{equation*}
\left\Vert f_{2}\left( U^{n-1},V^{n-1},U^{n},V^{n}\right) \right\Vert \leq
\left[ \omega \left( 1+3\left\vert \alpha \right\vert \right) +\left(
1+3\left\vert \beta \right\vert \right) \right] \eta .
\end{equation*}%
For%
\begin{equation*}
\eta \leq \eta _{3}=\frac{\varepsilon }{\omega \left( 1+3\left\vert \alpha
\right\vert \right) +\left( 1+3\left\vert \beta \right\vert \right) },
\end{equation*}%
we obtain $\left\Vert V^{n+1}\right\Vert \leq \varepsilon .$ Finally,
choosing $\eta $ the minimum between $\eta _{2}$ and $\eta _{3}$ the
criterion is proved.

\begin{lemma}
As the numerical scheme is consistent and stable, it is then convergent.
\end{lemma}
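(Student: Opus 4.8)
The plan is to follow the Lax--Richtmyer equivalence principle, adapted to the present nonlinear setting, so that consistency together with the already-proved stability yields convergence through a controlled error recurrence. Let $u^{n}=\left(u_{j,m}^{n}\right)$ denote the restriction of the exact solution to the grid and set $e^{n}=U^{n}-u^{n}$, together with $\widetilde{e}^{n}=V^{n}-v^{n}$ for the auxiliary variable. First I would substitute the exact solution into the discrete system $\left(\ref{LyapunovSystem}\right)$: by the consistency result and the truncation operators $\mathcal{L}_{u,v}^{1}$ and $\mathcal{L}_{u,v}^{2}$, both of order $O\left(l^{2}+h^{2}\right)$, the exact solution satisfies the very same scheme up to these truncation terms. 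Subtracting this identity from the numerical identity $\left(\ref{LyapunovSystem}\right)$ produces an error system of identical algebraic structure, whose forcing is the truncation error $\tau=O\left(l^{2}+h^{2}\right)$ plus the nonlinear residual $F^{n}(U)-F^{n}(u)$.

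Next I would invert the scheme using the homeomorphism $\phi$ constructed in the solvability section. Since $\phi\left(e^{n+1},\widetilde{e}^{n+1}\right)$ equals a linear combination of $e^{n},e^{n-1},\widetilde{e}^{n},\widetilde{e}^{n-1}$ together with $\tau$ and the nonlinear residual, and since $\phi^{-1}$ is continuous with norm $C=\|\phi^{-1}\|$ by the Banach Lemma, one obtains a recurrence of the form
\[
\left\Vert\left(e^{n+1},\widetilde{e}^{n+1}\right)\right\Vert\leq C\bigl(a\left\Vert\left(e^{n},\widetilde{e}^{n}\right)\right\Vert+b\left\Vert\left(e^{n-1},\widetilde{e}^{n-1}\right)\right\Vert+\left\Vert F^{n}(U)-F^{n}(u)\right\Vert+\tau\bigr),
\]
where $a$ and $b$ are exactly the bounded coefficients already estimated in the stability proof.

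The crucial step is to control the nonlinear residual. Because the stability Lemma guarantees that both $U^{n}$ and the restriction $u^{n}$ of the smooth exact solution lie in a fixed bounded ball, and because $F_{j,m}^{n}$ is a quadratic form in undivided first differences of its argument, the factorization $X^{2}-Y^{2}=(X-Y)(X+Y)$ gives $\left\Vert F^{n}(U)-F^{n}(u)\right\Vert\leq L\left\Vert e^{n}\right\Vert$ with $L$ depending only on the uniform bound, not on $n$, $h$ or $l$ (this is the telescoped version of the estimate $\left\Vert F^{n}\right\Vert_{2}\leq 2\left\Vert U^{n}\right\Vert_{2}^{2}$). Substituting this back linearizes the recurrence into $\left\Vert\left(e^{n+1},\widetilde{e}^{n+1}\right)\right\Vert\leq(1+Kl)\max_{k\leq n}\left\Vert\left(e^{k},\widetilde{e}^{k}\right)\right\Vert+C\tau$, where the constant $K$ absorbs $C$, $L$ and the coefficients $a,b$.

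Finally I would close the argument by a discrete Gr\"onwall inequality. Starting from $e^{0}=0$ and $\left\Vert e^{1}\right\Vert=O\left(l^{2}+h^{2}\right)$, which follows from the initialization formulas used for $U^{-1}$ and $V^{-1}$ in the stability proof, iterating gives $\left\Vert\left(e^{n},\widetilde{e}^{n}\right)\right\Vert\leq\dfrac{C\tau}{Kl}\bigl((1+Kl)^{n}-1\bigr)\leq\dfrac{C\tau}{Kl}\,e^{K(t_{n}-t_{0})}$, which tends to $0$ as $h,l\to 0$ under the standing assumption $l=o\left(h^{4+s}\right)$, uniformly on any finite time interval. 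This is precisely convergence. The step I expect to be the main obstacle is the nonlinear residual: the scheme is only \emph{conditionally} linearizable, so the whole estimate rests on the a priori uniform boundedness supplied by the stability Lemma, which is exactly what lets the Lipschitz constant $L$, and hence $K$, be chosen independently of the discretization parameters.
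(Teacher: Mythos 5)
You should first know what you are being compared against: the paper gives no written proof of this lemma at all — it is stated as a bare appeal to the Lax--Richtmyer equivalence principle, so your attempt to build an actual error analysis goes well beyond the source. The difficulty is that your key quantitative claim does not follow from the estimates actually available. When you eliminate $V$ and take norms, the coefficients in the error recurrence are exactly those of the stability section: terms of size $8\omega\sigma$ with $\sigma=\frac{2l}{h^{2}}$, $\delta=\frac{1}{h^{2}}$, $\omega=\vert q\vert+8\delta\vert\kappa\vert$, plus a nonlinear contribution of size $\sigma\vert\lambda\vert L$. Hence the per-step amplification is of the form $1+c_{1}\frac{l}{h^{4}}+c_{2}\frac{l}{h^{2}}$, not $1+Kl$ with $K$ independent of the discretization. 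Under $l=o(h^{4+s})$ these corrections do tend to $0$, but the $K$ your discrete Gr\"onwall step needs is of order $h^{-4}$, so on a fixed time interval with $n\sim T/l$ steps the accumulated factor $(1+Kl)^{n}\leq e^{K(t_{n}-t_{0})}$ behaves like $e^{cT/h^{4}}$ and blows up as $h\to 0$; the final bound $\frac{C\tau}{Kl}\,e^{K(t_{n}-t_{0})}$ therefore does not go to zero, and convergence is not established. What is missing is precisely the stability notion that Lax--Richtmyer actually requires: a uniform bound on products of the discrete evolution operators (equivalently an energy or spectral estimate showing the true amplification is $1+O(l)$ per step, e.g.\ by exploiting the sign structure of $A$). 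The Lyapunov-style lemma proved in the paper (boundedness for sufficiently small data) does not supply this, and crude operator-norm bounds cannot either, so your reduction of convergence to that lemma has a genuine gap rather than a fixable detail.

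Two secondary points. The constant $C=\Vert\phi^{-1}\Vert$ depends on $h$ and $l$; you need it bounded uniformly, which is recoverable from the solvability argument (since $\mathcal{K}\to I$ one gets $\Vert\mathcal{K}^{-1}\Vert\leq 2$ for $l=o(h^{4+s})$ small), but it must be said. Likewise your Lipschitz bound $\Vert F^{n}(U)-F^{n}(u)\Vert\leq L\Vert e^{n}\Vert$ requires a uniform bound on $U^{n}$ and $u^{n}$ that the stability lemma only provides for sufficiently small initial data, so the convergence statement you would obtain is at best conditional on that smallness; this should appear explicitly as a hypothesis.
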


\section{Numerical Implementations}

We propose in this section to present some numerical examples to validate
the theoretical results developed previously. The error between the exact
solutions and the numerical ones via an $L_{2}$ discrete norm will be
estimated. The matrix norm used will be
\begin{equation*}
\Vert X\Vert _{2}=\left( \displaystyle\sum_{i,j=1}^{N}|X_{ij}|^{2}\right)
^{1/2}
\end{equation*}%
for a matrix $X=(X_{ij})\in \mathcal{M}_{N+2}\mathbb{C}$. Denote $u^{n}$ the
net function $u(x,y,t^{n})$ and $U^{n}$ the numerical solution. We propose
to compute the discrete error
\begin{equation}
Er=\displaystyle\max_{n}\Vert U^{n}-u^{n}\Vert _{2}  \label{Er}
\end{equation}%
on the grid $(x_{i},y_{j})$, $0\leq i,j\leq J+1$ and to validate the
convergence rate of the proposed schemes we propose to compute the
proportion
\begin{equation*}
C=\displaystyle\frac{Er}{l^{2}+h^{2}}.
\end{equation*}%
We consider the inhomogeneous problem%
\begin{equation}
\left\{
\begin{array}{l}
\frac{\partial u}{\partial t}=\Delta v+\lambda \left\vert \nabla
u\right\vert ^{2}+g(x,y,t),\quad (x,y,t)\in \Omega \times (t_{0},+\infty )
\\
v=qu-\kappa \Delta u,\quad (x,y,t)\in \Omega \times (t_{0},+\infty ) \\
\left( u,v\right) (x,y,t_{0})=\left( \varphi ,\psi \right) (x,y),\quad
(x,y)\in \overline{\Omega } \\
\overrightarrow{\nabla }(u,v)(x,y,t)=0,\quad (x,y,t)\in \partial \Omega
\times (t_{0},+\infty )%
\end{array}%
\right.  \label{Problem2}
\end{equation}%
on the rectangular domain $\Omega =[-1,1]\times \lbrack -1,1]$, where
\begin{eqnarray}
g(x,y,t) &=&Ke(t)[e(t)[C^{4}(x)S^{2}(x)C^{6}(y)+C^{6}(x)C^{4}(y)S^{2}(y)] \\
&&-\left[ C(x)S^{2}(x)C(y)S^{2}(y)\right] ]
\end{eqnarray}%
and the exact solution
\begin{equation*}
(u,v)(x,y,t)=(e(t)C^{3}(x)C^{3}(y),...),
\end{equation*}%
with

\begin{equation*}
C(x)=\cos (\frac{\pi x}{2})\text{ \ , \ }S(x)=\sin (\frac{\pi x}{2})\text{ \
, \ }e(t)=e^{-9\pi ^{4}t/2}\text{ \ \ and \ \ }K=\frac{9\pi ^{4}}{2}.
\end{equation*}%
In the following tables, numerical results are provided. We computed for
different space and time steps the discrete $L_{2}$-error estimates defined
by (\ref{Er}). The time interval is $[0,1]$ for a choice $t_{0}=0$ and $T=1$%
. The following results are obtained for different values of the parameters $%
J$ (and thus $h$), $l$ ((and thus $N$). The parameters $\alpha $ and $\beta $
are fixed to $\alpha =\frac{1}{3}$ and $\beta =\frac{1}{5}.$ We just notice
that some variations done on these latter parameters have induced an
important variation in the error estimates which explains their effect as
they calibrates the position of the approximated solution around the exact
one. The parameters $q,$ $\lambda $ and $\kappa $ have the role of
viscosity-type coefficients and fixed to the values $\kappa =1,$ $\lambda
=-2\pi ^{2}$ and $q=\frac{11\pi ^{2}}{2}.$ The following tables outputs the
error estimates relatively to the discrete $L^{2}-$norm defined above for
different values of the space and time steps. First, we provide estimates
when the optimal condition $l=o(h^{4+s}),$ $s>0$ is fulfilled. $s$ is fixed
to $0.01$ for the first table. Next, to control more the effect of such
assumption which is due to the presence of a second order Laplacian in the
original problem, we tested the convergence of the scheme at some orders
less than the optimal power fixed to $4.$ The second table provides the
estimates with a slightly sub-critical power $4-s,$ $s>0$ small enough. Here
also $s$ is fixed to $0.01$, and finally in the third table, we tested the
discrete scheme for a strong sub-critical power.

\ \ \ \

\begin{table}[tbp] \centering%
\begin{tabular}{|c|c|c|}
\hline
$J$ & $N$ & $Er2$ \\ \hline
$10$ & $640$ & $1,25.10^{-5}$ \\ \hline
$12$ & $1320$ & $2,46.10^{-6}$ \\ \hline
$14$ & $2450$ & $6,14.10^{-7}$ \\ \hline
$16$ & $4183$ & $1,84.10^{-7}$ \\ \hline
$18$ & $6707$ & $6,38.10^{-8}$ \\ \hline
$20$ & $10233$ & $2,46.10^{-8}$ \\ \hline
$22$ & $14997$ & $1,04.10^{-8}$ \\ \hline
$24$ & $21258$ & $4,76.10^{-9}$ \\ \hline
$25$ & $25039$ & $3,29.10^{-9}$ \\ \hline
$30$ & $52015$ & $6,36.10^{-10}$ \\ \hline
\end{tabular}%
\caption{Error estimates for $l=o(h^{4.01})$}\label{tab1}%
\end{table}%

\begin{table}[tbp] \centering%
\begin{tabular}{|c|c|c|}
\hline
$J$ & $N$ & $Er2$ \\ \hline
$10$ & $616$ & $1,35.10^{-5}$ \\ \hline
$12$ & $1273$ & $2,55.10^{-6}$ \\ \hline
$14$ & $2355$ & $6,65.10^{-7}$ \\ \hline
$16$ & $4012$ & $2,00.10^{-7}$ \\ \hline
$18$ & $6419$ & $6,96.10^{-8}$ \\ \hline
$20$ & $7973$ & $4,06.10^{-8}$ \\ \hline
$22$ & $14295$ & $1,14.10^{-8}$ \\ \hline
$24$ & $20228$ & $5,26.10^{-9}$ \\ \hline
$25$ & $23806$ & $3,64.10^{-9}$ \\ \hline
$30$ & $49273$ & $7,09.10^{-10}$ \\ \hline
\end{tabular}%
\caption{Error estimates for $l=o(h^{3.99})$}\label{tab2}%
\end{table}%

\bigskip

\begin{table}[tbp] \centering%
\begin{tabular}{|c|c|c|}
\hline
$J$ & $N$ & $Er2$ \\ \hline
$10$ & $128$ & $3,10.10^{-4}$ \\ \hline
$12$ & $220$ & $8,82.10^{-5}$ \\ \hline
$14$ & $350$ & $2,99.10^{-5}$ \\ \hline
$16$ & $523$ & $1,17.10^{-5}$ \\ \hline
$18$ & $746$ & $9,59.10^{-6}$ \\ \hline
$20$ & $1024$ & $2,45.10^{-6}$ \\ \hline
$22$ & $1364$ & $1,26.10^{-6}$ \\ \hline
$24$ & $1772$ & $6,84.10^{-7}$ \\ \hline
$25$ & $2004$ & $5,14.10^{-7}$ \\ \hline
$30$ & $3468$ & $1,34.10^{-7}$ \\ \hline
$50$ & $16137$ & $3,96.10^{-9}$ \\ \hline
\end{tabular}%
\caption{Error estimates for $l=o(h^{3.01})$}\label{tab3}%
\end{table}%

\ \

\section{Conclusion}

This paper investigated the solution of the well-known Kuramoto-Sivashinsky
equation in two-dimensional case by applying a two-dimensional finite
difference discretization. The original equation is a 4-th order partial
differential equation. Thus, in a first step it was recasted into a system
of second order partial differential equations by applying a reduction
order. Next, the continuous system of simultaneous coupled PDEs has been
transformed into an algebraic discrete system involving a generalized
Lyapunov-Syslvester type operators. Solvability, consistency, stability and
convergence are then established by applying well-known methods such as
Lax-Richtmyer equivalence theorem and Lyapunov Stability and by examining
the topological properties of the obtained Lyapunov-Sylvester type
operators. The method was finally improved by developing a numerical example.


\begin{thebibliography}{99}
\bibitem{Benachour} S. Benachour,I. Kukavica, W.Rusin, M.Ziane. Anisotropic
Estimates for the Two-Dimensional Kuramoto--Sivashinsky Equation. Springer
Science+Business Media New York 2014. J Dyn Diff Equat DOI
10.1007/s10884-014-9372-3.

\bibitem{Benmabrouk2} A. Ben Mabrouk and M. Ayadi, A linearized
finite-difference method for the solution of some mixed concave and convex
non-linear problems, Appl. Math. Comput. 197 (2008), 1--10.

\bibitem{Benmabrouk1} A. Ben Mabrouk and M. Ayadi, Lyapunov type operators
for numerical solutions of PDEs, Appl. Math. Comput. 204 (2008), 395--407.

\bibitem{Benmabrouk3} A. Ben Mabrouk, M.L. Ben Mohamed, K. Omrani,
Finite-difference approximate solutions for a mixed sub-superlinear
equation, Appl. Math. Comput. 187 (2007) 1007-1016.

\bibitem{Bellout} H. Bellout, S. Benachour, E.Titi, Finite time regularity
versus global regularity for hyperviscous Hamilton--Jacobi-like equations.
Nonlinearity 16, 1967--1989 (2003).

\bibitem{Bezia} A. Bezia, A. Ben Mabrouk and K Betina, Lyapunov-Sylvester
Computational Method for Two-Dimensional Boussinesq Equation. Numerical
Linear Algebra with Applications. resubmitted in revised form, 2015.

\bibitem{Collet} P. Collet, J. Eckmann, H. Epstein, J. Stubbe, A global
attracting set for the Kuramoto--Sivashinsky equation. Comm. Math. Phys.
152(1), 203--214 (1993).

\bibitem{Giacomelli} L. Giacomelli, F. Otto, New bounds for the
Kuramoto--Sivashinsky equation. Comm. Pure Appl. Math. 43, 297--318 (2005).

\bibitem{Goncalves} E. Gon\c{c}alv\`{e}s, Resolution numerique,
discretisation des EDP et EDO. Cours, Institut National Polytechnique de
Grenoble, (2005).

\bibitem{Goodman} J. Goodman, Stability of the Kuramoto--Sivashinsky and
related systems. Comm. Pure Appl. Math. 47(3), 293--306 (1994).

\bibitem{Hansen} J. L. Hansen and T.Bohr, Fractal tracer distributions in
turbulent field theories, arXiv:chao- dyn/9709008V1.

\bibitem{HenriCartan} H. Cartan, Differential Calculus, Kershaw Publishing
Company, London 1971, Translated from the original French text Calcul
differentiel, first published by Hermann in (1967).

\bibitem{Hong} Q.Hong-Ji, J.Yong-Hao, C.Chuan-Fu, H.Li-Hua,Y.Kui,S.Jian-Da,
Dynamic Scaling Behaviour in (2+1)-Dimensional Kuramoto Sivashinsky Model,
CHIN.PHYS.LETT. Vol. 20,5 (2003) 622-625.

\bibitem{Ilyashenko} Y.S. Ilyashenko, : Global analysis of the phase
portrait for the Kuramoto--Sivashinsky equation. J. Dyn. Differ. Equ. 4(4),
585--615 (1992).

\bibitem{Jameson} A. Jameson, Solution of equation $AX+XB=C$ by inversion of
an $M\times M$ or $N\times N$ matrix. SIAM J. Appl Math. 16(5), 1020-1023.
(1968).

\bibitem{Jayaprakash} C. Jayaprakash, F. Hayot, R. Pandit, Phys. Rev. Lett.
71, 12 (1993).

\bibitem{Jia-Li} J. Jia, S. Li, On the inverse and determinant of general
bordered tridiagonal matrices. Computers \& Mathematics with Applications,
69(6), 503-509. (2015).

\bibitem{Kuramoto} Y. Kuramoto, T. Tsuzuki, Persistent Propagation of
Concentration Waves In Dissipative Media Far from Thermal Equilibrium.55,
356 (1976).

\bibitem{Lancaster} P. Lancaster, Explixit solutions of linear matrix
equations.Siam review Vol. 12, No. 4, October (1970).

\bibitem{Nadjafikhah} M. Nadjafikhah, F. Ahangari. Classical and
Nonclassical symmetries of the (2+1)- dimensional Kuramoto-Sivashinsky
equation. School of Mathematics, Iran University of Science and Technology,
Narmak, Tehran 1684613114, Iran.

\bibitem{Nicolaenko} Nicolaenko, B., Scheurer, B., Temam, R.: Some global
dynamical properties of the Kuramoto--Sivashinsky equations: nonlinear
stability and attractors. Phys. D 16(2), 155--183 (1985).

\bibitem{Otto} F. Otto, Optimal bounds on the Kuramoto--Sivashinsky
equation. J. Funct. Anal. 257(7), 2188--2245 (2009).

\bibitem{Procaccia} I. Procaccia, M. H. Jensen, V. S. L'vov, K. Sneppen, R.
Zeitak, Phys. Rev. A 46, 3220 (1992)

\bibitem{Raugel} G. Raugel, G.R. Sell, Navier--Stokes equations on thin 3D
domains. II. Global regularity of spatially periodic solutions. Nonlinear
Partial Differential Equations and Their Applications. Coll\`{e}ge de France
Seminar, vol. XI, pp. 205--247. Longman, Harlow (1994).

\bibitem{Rost} M. Rost, J. Krug, Anisotropic Kuramoto--Sivashinsky equation
for surface growth erosion. Phys. Rev. Lett. bf 75(21), 3894--3897 (1995).

\bibitem{Rionero} S. Rionero, Stability results for hyperbolic and parabolic
equations. Transport Theory \& Statistical Physics, 25(3-5), 323-337. (1996).

\bibitem{Simoncini} V. Simoncini, Computatioanl methods for linear matrix
equations, Course in Dipartimento di Matematica, Universita di Bologna,
Piazza di Porta San Donato 5, I-40127 Bologna, Italia, March 12, (2013).

\bibitem{Sivashinsky} G. I. Sivashinsky, Acta Astronautica 6, 569 (1979).

\bibitem{Sivashinsky2} G. I. Sivashinsky, On the flame propagation under
conditions of stochiometry. SIAM J. Appl. Math. 75,67--82 (1980).

\bibitem{Sell} G.R. Sell, M. Taboada, Local dissipativity and attractors for
the Kuramoto--Sivashinsky equation in thin 2D domains. Nonlin. Anal. 18,
671--687 (1992).

\bibitem{Roth} W. E. Roth, The equations $AX-YB=C$ and $AX-XB=C$ in
matrices, pp. 392-396. Ibid., 3 (1952).

\bibitem{Temam} R. Temam, Infinite-Dimensional Dynamical Systems in
Mechanics and Physics. Springer, Berlin (1988).
\end{thebibliography}
\end{document}